\newtheorem*{thm-plain}{Theorem}
\newtheorem{thm}{Theorem}[section]
\newtheorem*{thmw}{Theorem}
\newtheorem{lem}[thm]{Lemma}
\newtheorem{prp}[thm]{Proposition}
\theoremstyle{definition}
\newtheorem{dfn}[thm]{Definition}
\newtheorem*{question}{Question}
\theoremstyle{remark}
\newtheorem{rem}[thm]{Remark}
\newcommand{\R}{\mathbb{R}}
\newcommand{\N}{\mathbb{N}}
\newcommand{\Z}{\mathbb{Z}}
\newcommand{\Or}{\mathrm{O}}
\newcommand{\To}{\rightarrow}
\newcommand{\Orb}{\mathcal{O}}
\title[On the existence of closed geodesics on $2$-orbifolds]{On the existence of closed geodesics on $2$-orbifolds}
\author{Christian Lange}
\address{Christian Lange, Mathematisches Institut der Universit\"at zu K\"oln, Weyertal 86-90, 50931 K\"oln, Germany}
\email{clange@math.uni-koeln.de}
\thanks{The author is partially supported by the DFG funded project SFB TRR 191.}
\subjclass{53C22, 57R18}
\begin{document}

\begin{abstract} We show that on every compact Riemannian $2$-orbifold there exist infinitely many closed geodesics of positive length.
\end{abstract}\maketitle	

\section{Introduction}
\label{sec:Pse_Introduction}

Existence and properties of closed geodesics on Riemannian manifolds have been subject of intense research since Poincar\'e's work \cite{Poincare} from the beginning of the 20th century. A prominent result in the field is a theorem by Gromoll and Meyer that guarantees the existence of infinitely many closed geodesics on compact Riemannian manifolds on some cohomological assumption \cite{MR0264551}. This assumption is satisfied by a large class of manifolds but not by spheres. Generalizing ideas of Birkhoff \cite{MR0209095}, Franks \cite{MR1161099} and Bangert \cite{MR1209957} together proved the existence of infinitely many closed geodesics on every Riemannian $2$-sphere.

In \cite{MR2218759} Guruprasad and Haefliger generalized the result by Gromoll and Meyer to the setting of Riemannian orbifolds. In the present paper we generalize the result by Bangert and Franks in the following way.

\begin{thmw}\label{thm:inf_geo} On every compact Riemannian $2$-orbifold there exist infinitely many geometrically distinct closed geodesics of positive length. 
\end{thmw}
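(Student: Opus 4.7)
The overall plan is to split compact $2$-orbifolds according to the classification of their underlying surface and cone structure, and in each case either to apply an existing result or to adapt the Birkhoff--Franks--Bangert argument for $S^2$ to the orbifold setting.

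First I would reduce to orientable orbifolds without mirror boundary by passing to the orientation double cover, using that closed geodesics lift and project along finite orbifold covers and that each closed orbifold geodesic downstairs has only finitely many preimages. For the resulting $\Orb$, if $\chi^{\mathrm{orb}}(\Orb)\le 0$ then the orbifold fundamental group is infinite, the orbifold free loop space has unbounded rational Betti numbers, and the orbifold Gromoll--Meyer theorem of Guruprasad--Haefliger \cite{MR2218759} applies directly. This handles every orbifold whose underlying surface has genus $\ge 1$, and every spherical underlying orbifold with enough cone points to force $\chi^{\mathrm{orb}}\le 0$. The remaining cases are: the smooth $S^2$ (handled by Bangert--Franks); the ``good'' spherical orbifolds $S^2(p,p)$, $S^2(2,2,p)$, $S^2(2,3,n)$ for $n\in\{3,4,5\}$; and the ``bad'' orbifolds $S^2(p)$ and $S^2(p,q)$ with $p\ne q$.

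For the good spherical orbifolds, which are quotients of $S^2$ by a finite subgroup $\Gamma\subset\Or(3)$, I would lift the orbifold metric to a $\Gamma$-equivariant smooth Riemannian metric on $S^2$, invoke Bangert--Franks to obtain infinitely many closed geodesics upstairs, and push them down to $\Orb$. Since $\Gamma$ is finite, at most $|\Gamma|$ of the upstairs geodesics can project to the same orbifold geodesic, so one still obtains infinitely many geometrically distinct closed orbifold geodesics.

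The bad orbifolds $S^2(p)$ and $S^2(p,q)$ with $p\ne q$ admit no manifold cover, so I would adapt the Franks--Bangert argument directly. Concretely, I would first produce a short closed orbifold geodesic -- for instance the ``axis'' through the cone point(s), constructed by a min--max or curve-shortening argument -- use it as the binding of a global Birkhoff annulus for the geodesic flow on the regular part of the unit tangent orbibundle, check that the first-return map extends to an area-preserving twist homeomorphism of the closed annulus, and exhibit one further periodic orbit to serve as a seed for Franks' theorem on area-preserving annulus maps; Franks' theorem then supplies infinitely many periodic orbits and hence infinitely many closed orbifold geodesics. The main obstacle is precisely this last step: making sense of the Birkhoff annulus construction and verifying the Franks hypotheses in the presence of cone points, where the geodesic flow on the unit tangent orbibundle is only piecewise smooth along singular fibres, where the global topology of the unit tangent orbibundle differs from the $\RP^3$ of the sphere, and where the seed periodic orbit must be produced by a genuinely orbifold-specific argument rather than by a symmetry inherited from $S^2$.
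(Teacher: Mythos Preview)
Your reduction is broadly sound, but the proof for bad spindle orbifolds has two genuine gaps.

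First, the choice of binding. You propose to use an ``axis'' through the cone point(s), but a geodesic that hits an isolated singular point of even order is \emph{reflected} there, so your axis is typically a segment bouncing back and forth between the two cone points; at odd-order points it passes straight through. Either way, the unit tangent bundle is singular along such a geodesic and the first-return construction does not obviously yield a homeomorphism of a closed annulus. The paper avoids this entirely: its central technical step is to produce, via curve-shortening flow, a \emph{separating} simple closed geodesic lying in the \emph{regular} part of $S^2(p,q)$. This requires showing that an embedded separating loop either collapses to a point or flows forever without reaching a cone point (Lemmas~\ref{prp:flow_orbifold}--\ref{prp:flow_orbifold_gen}), and then a transversality argument in a local manifold cover to rule out the limit geodesic touching a cone point (Proposition~\ref{prp:simple_geodesic}). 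The coprimality of $p$ and $q$ is used here; this is why the paper first passes to the simply connected orbi-cover rather than treating all bad spindles directly.

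Second, you only sketch the Franks half of Bangert--Franks. Even with a separating geodesic $c$ in the regular part, Birkhoff's annulus map is defined only if (i) every geodesic starting on $c$ returns to $c$, and (ii) $c$ has conjugate points. Bangert's paper \cite{MR1209957} is precisely about what to do when (i) or (ii) fails, and this is where most of the work lies. The present paper adapts those arguments to orbifolds (Propositions~\ref{prp:separating_without_conjugate_points} and~\ref{prp:separating_with_conjugate_points}), using the orbifold loop space of \cite{MR2218759} and the Bangert--Klingenberg results on homology generated by iterated geodesics (Theorem~\ref{thm:invisible_img}, Lemma~\ref{lem:img}). Your proposal does not touch this; without it, the argument is incomplete even for the smooth $2$-sphere.

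A minor point: invoking Guruprasad--Haefliger for $\chi^{\mathrm{orb}}\le 0$ because ``$\pi_1^{\mathrm{orb}}$ is infinite'' is not the Gromoll--Meyer hypothesis. The paper's reduction is simpler: every good compact $2$-orbifold is finitely covered by a compact surface, and every compact Riemannian surface has infinitely many closed geodesics; thus only simply connected spindle orbifolds remain.
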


For spindle orbifolds (see Figure \ref{spindle}), which also go by the name of football orbifolds, this statement was before only known in the rotational symmetric case \cite{MR2350076}. In \cite{MR1419471} the existence of a closed geodesic in the regular part of any $2$-orbifold with isolated singularities is claimed. The alleged geodesic in the regular part is obtained as a limit of locally length minimizing curves contained in the complement of shrinking $\delta$-neighborhoods of the singular points. Note, however, that there are examples in which such curves have a limit that is not contained in the regular part. For instance, on a plane with two singular points of cone angle $\pi$ which is otherwise flat, curves of minimal length that enclose both $\delta$-neighborhoods converge to a straight segment between the singular points.

To prove our result we first reduce its statement to the case of simply connected spindle orbifolds (see Section \ref{sec:pre}). Using the curve-shortening flow we are able to prove the existence of an embedded geodesic in the regular part of any simply connected spindle orbifold and to apply ideas from Bangert's proof and Frank's result in this case. Our proof relies on the observation that embedded loops in the regular part that evolve under the curve-shortening flow either stay in the regular part forever, or collapse into a singular point in finite time (see Section \ref{sec:curve_shortening}). The possibility of a limit curve being not entirely contained in the regular part is then excluded by topological arguments (see Proposition \ref{prp:simple_geodesic}).

For spindle orbifolds with $S^1$-symmetry actually more is known, namely the existence of infinitely many distinct (even modulo isometries) closed geodesics in the regular part \cite{MR2350076}. For general spindle orbifolds (and also in other cases) this is not known to be true. For some $2$-orbifolds, e.g. for a sphere with three singular points of order $2$, even the existence of a single closed geodesic in the regular part does not seem to be rigorously proven, yet. In Section \ref{sec_geo_in_reg_part} we will add some more comments on the following questions.

\begin{question} Does there always exist a closed geodesic in the regular part of a $2$-orbifold with isolated singularities? When do there exist infinitely many?
\end{question}

Note that there are examples of surfaces with more general conical singularities that do not support infinitely many closed geodesics \cite{MR2350076}.

\section{Preliminaries}\label{sec:pre}

\subsection{Orbifolds} \label{sub:Riem_orb}
Recall that a \emph{length space} is a metric space in which the distance of any two points can be realized as the infimum of the lengths of all rectifiable paths connecting these points \cite{MR1835418}. A Riemannian orbifold can be defined as follows.
\begin{dfn} An \emph{$n$-dimensional Riemannian orbifold} $\Orb$ is a length space such that for each point $x \in \Orb$, there exists a neighborhood $U$ of $x$ in $\Orb$, an $n$-dimensional Riemannian manifold $M$ and a finite group $G$ acting by isometries on $M$ such that $U$ and $M/G$ are isometric.
\end{dfn}
Behind the above definition lies the fact that an effective isometric action of a finite group on a simply connected Riemannian manifold can be recovered from the corresponding metric quotient. In the case of spheres this is proven in \cite{MR1935486}; the general case follows in a similar way (see e.g. \cite[Lem.~133]{Lange}). In particular, the underlying topological space of a Riemannian orbifold in this sense admits a smooth orbifold structure and a compatible Riemannian structure in the usual sense (cf. \cite{MR2687544,MR1744486,MR2218759}) that in turn induces the metric. For a point $x$ on a Riemannian orbifold the isotropy group of a preimage of $x$ in a Riemannian manifold chart is uniquely determined up to conjugation. Its conjugacy class in $\Or(n)$ is denoted as $G_x$ and is called the \emph{local group} of $\Orb$ at $x$. The points with trivial local group form the \emph{regular part} of $\Orb$, which is a Riemannian manifold. All other points are called \emph{singular}. We will be particularly concerned with $2$-orbifolds all of whose singular points are isolated. In this case all local groups are cyclic and we refer to their orders as the \emph{orders of the singular points}.

We are interested in (orbifold) geodesics defined in the following way.

\begin{dfn}\label{dfn:orb_geo} An (orbifold) \emph{geodesic} on a Riemannian orbifold is a continuous path that can locally be lifted to a geodesic in a Riemannian manifold chart. A closed (orbifold) geodesic is a continuous loop that is a (orbifold) geodesic on each subinterval.
\end{dfn}

In the following, by a (closed) geodesic we always mean a (closed) orbifold geodesic. A geodesic that encounters an isolated singularity at an interior point is not locally length minimizing \cite[Thm.~3]{MR2687544}. On a $2$-orbifold such a geodesic is either reflected or goes straight through the singular point depending on whether the order of the singular point is even or odd. We say that two geodesics are \emph{geometrically distinct} if their geometric trajectories differ. Given a closed geodesic $c$ the iterations $c^m(t):=c(mt)$, $m\in \N$, form a whole \emph{tower} of geometrically equivalent closed geodesics. In the following, by infinitely many closed geodesics we always mean infinitely many geometrically distinct closed geodesics of positive length.

We need the following concept.

\begin{dfn} A \emph{covering orbifold} or \emph{orbi-cover} of a Riemannian orbifold $\Orb$ is a Riemannian orbifold $\Orb'$ together with a surjective map $\varphi : \Orb' \To \Orb$ such that each point $x\in \Orb$ has a neighborhood $U$ isometric to some $M/G$ for which each connected component $U_i$ of $\varphi^{-1}(U)$ is isometric to $M/G_i$ for some subgroup $G_i<G$ such that the isometries respect the projections.
\end{dfn}
\begin{figure}
	\centering
		\includegraphics[width=0.25\textwidth]{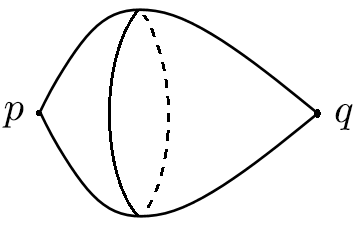}
	\caption{A \emph{$(p,q)$-spindle} orbifold $S^2(p,q)$, i.e. a $2$-orbifold with at most two isolated singularities of order $p$ and $q$ (with $p$ or $q$ perhaps being $1$). Spindle orbifolds are also known as \emph{footballs} and $(p,1)$-spindle orbifolds as \emph{teardrops}. The orbifolds in the picture are bad if and only if $p\neq q$ and simply connected as orbifolds if and only if $p$ and $q$ are coprime.}
	\label{spindle}
\end{figure}
An orbifold is called \emph{simply connected}, if it does not admit a nontrival orbi-cover. An orbifold is called \emph{good} (or developable) if it is covered by a manifold; otherwise it is called \emph{bad} \cite{Thurston}. The only bad $2$-orbifolds are depicted in Figure \ref{spindle}, cf. \cite[Thm.~2.3]{MR0705527}. In fact, every compact good $2$-orbifold is \emph{very good}, meaning that it is finitely covered by a (necessarily compact) manifold \cite[Thm.~2.5]{MR0705527}. Clearly, if an orbifold is finitely covered by an orbifold with infinitely many closed geodesics, then it has itself infinitely many closed geodesics. Since all Riemannian surfaces have infinitely many closed geodesics (see e.g. \cite[XII.5]{MR2724440} for a survey), in view of proving our main result it suffices to treat \emph{simply connected spindle orbifolds}, i.e. spindle orbifolds $S^2(p,q)$ with $p$ and $q$ coprime (see Figure \ref{spindle}).

\subsection{Orbifold loop spaces} \label{sub:orb_loop}

We would like to apply Morse theory and homological methods to find closed geodesics on orbifolds. To this end a notion of a loop space is needed. Such a notion is defined in \cite{MR2218759}. To any compact Riemannian orbifold $\Orb$ a free loop space $\Lambda \Orb$ is associated and endowed with a natural structure of a complete Riemannian Hilbert orbifold. We sketch this construction in the appendix, Section \ref{app:loop_space}.

Here we give an alternative description of $\Lambda \Orb$ in the case in which $\Orb$ has only isolated singularities. So in this section $\Orb$ will always be a \emph{Riemannian orbifold with only isolated singularities}. Let $\gamma$ be a loop on $\Orb$. In the following we always assume that such a loop $\gamma:S^1 \To \Orb$ is of class $H^1$, i.e. that it locally lifts to absolutely continuous curves on manifold charts with square-integrable velocities.
\begin{dfn} \label{dfn:develop} A \emph{development of $\gamma$} is a loop $\hat \gamma$ on a Riemannian manifold $M$ together with a map $M \To \Orb$ which is locally an orbi-covering and which projects $\hat \gamma$ to $\gamma$ (respecting the parametrizations). The development is called \emph{geodesic} if $\hat \gamma$ is a geodesic on $M$.
\end{dfn}
Every loop on $\Orb$ can be locally lifted to Riemannian manifold charts. A development of a loop $\gamma$ on $\Orb$ can be obtained by gluing together the Riemannian manifold charts that support the local lifts. In particular, this yields indeed a loop after having carried out the identifications. Two developments $(M_1,\hat \gamma_1)$ and $(M_2,\hat \gamma_2)$ are said to be equivalent if there exist neighborhoods $M_1'$ of $\hat \gamma_1$ in $M_1$ and $M_2'$ of $\hat \gamma_2$ in $M_2$ and an isometry $M_1'\To M_2'$ that maps $\hat \gamma_1$ to $\hat \gamma_2$ (respecting the parametrizations). 
\begin{dfn} \label{dfn:orbi_loop} An \emph{orbifold loop} is a loop $\gamma$ on $\Orb$ together with an equivalence class of developments of $\gamma$. The orbifold loop is called \emph{geodesic} if the developments are geodesic.
\end{dfn}
The notion of a geodesic orbifold loop is equivalent to the notion of a closed orbifold geodesic. Every geodesic orbifold loop projects to a closed orbifold geodesic in the sense of Definition \ref{dfn:orb_geo} and every closed orbifold geodesic gives rise to a unique equivalence class of geodesic developments. However, viewing a closed geodesic as a geodesic orbifold loop shows that it can be assigned local invariants like the index or the nullity as in the manifold case. Moreover, as a set the free loop space $\Lambda \Orb$ is the collection of all orbifold loops and we can even recover its metric structure using the concept of developments. Indeed, let $\mathcal{D}=(M,\hat \gamma)$ be a development defining an orbifold loop $\gamma$. The free loop space $\Lambda M$ has a natural structure of a Riemannian Hilbert manifold \cite{Klingenberg} and we have $\hat \gamma \in \Lambda M$. A loop $\hat \gamma' \in \Lambda M$ can be regarded as an orbifold loop represented by the development $(M,\hat \gamma')$. If $\gamma$ is not a constant loop at a singular point of $\Orb$, then we can choose a neighborhood $U_{\mathcal{D}}$ of $\hat \gamma$ in $\Lambda M$ such that any pair of distinct loops $\hat \gamma', \hat \gamma'' \in U_{\mathcal{D}}$ projects to distinct loops on $\Orb$ and hence corresponds to distinct orbifold loops. The $U_{\mathcal{D}}$ obtained in this way patch together to a Riemannian Hilbert manifold $\Lambda_{\mathrm{reg}} \Orb$ by identifying elements that correspond to the same orbifold loop. Indeed, if distinct $\hat \gamma_1' \in U_{\mathcal{D}_1}$ and $\hat \gamma_2' \in U_{\mathcal{D}_2}$ are identified, then, by the definition of the equivalence relation on developments, a whole open neighborhood of $\hat \gamma_1'$ in $U_{\mathcal{D}_1}$ is isometrically identified with an open neighborhood of $\hat \gamma_2'$ in $U_{\mathcal{D}_2}$. The metric completion of $\Lambda_{\mathrm{reg}} \Orb$ is the Riemannian Hilbert orbifold $\Lambda \Orb$ introduced in \cite{MR2218759}, see Section \ref{app:loop_space}, and the singular set $\Lambda \Orb \backslash \Lambda_{\mathrm{reg}} \Orb$ corresponds to the constant loops at the singular points of $\Orb$.

Given an atlas of $\Orb$ the free loop space $\Lambda \Orb$ can be written as a quotient of a Riemannian Hilbert manifold $\Omega_X$ (by a groupoid), where $X$ is the disjoint union of the manifold charts of the atlas, and this description provides local manifold charts for $\Lambda \Orb$ \cite{MR2218759}, cf. Section \ref{app:loop_space}. On $\Lambda \Orb$ the energy function $E$ is defined and its critical points correspond to the closed geodesics. Since all singular points of $\Lambda \Orb$ have zero energy an explicit knowledge of their structure will not be relevant for our argument (cf. Section \ref{appendix}). For some $\kappa>0$ we write $\Lambda^{\kappa} :=\Lambda^{\kappa} \Orb :=\Lambda \Orb \cap E^{-1}([0,\kappa))$ and for a geodesic loop $c$ with $E(c)=\kappa$ we set $\Lambda(c):=\Lambda\Orb(c):=\Lambda^{\kappa} \Orb$. The spaces $\Omega_X$ and $\Lambda \Orb$ admit finite-dimensional approximations similarly as in the manifold case, see Proposition \ref{prp:finite_approx}.

From our description of $\Lambda \Orb$ it is clear that the index $\mathrm{ind}(c)$ and the nullity $\nu(c)$ of a nontrivial orbifold geodesic can be defined as in the manifold case. Moreover, it shows that the statements used in \cite{MR0264551} on the index and the nullity of iterated geodesics and on local loop space homology remain valid in the following form since their proofs involve only local arguments. Note that there is a natural $S^1$-action on $\Lambda \Orb$ given by reparametrization.

\begin{lem}\label{lem:loop_local_structure_prop}
For a Riemannian orbifold with isolated singularities and a nontrivial orbifold geodesic $c$ on it the following statements hold true.
\begin{enumerate}
\item Either $\mathrm{ind}(c^m)=0$ for all $m$ or $\mathrm{ind}(c^m)$ grows linearly in $m$ \cite[Lem.~1]{MR0264551}.
\item There are positive integers $k_1,\ldots,k_s$ and a sequence $m_j^i \in \N$, $i>0$, $j=1,\ldots,s$, such that the numbers $m_j^i k_j$ are mutually distinct, $m_j^1=1$, $\{m_j^i k_j\}=\N$, and $\nu(c^{m_j^i k_j})=\nu(c^{k_j})$ \cite[Lem.~2]{MR0264551}.
\item There exists some $k$ such that $H_p(\Lambda(c^m)\cup S^1c^m, \Lambda(c^m))=0$ except possibly for $\mathrm{ind}(c^m)\leq p\leq\mathrm{ind}(c^m)+k$ \cite[Cor.~1]{MR0264551}.
\end{enumerate} 
\end{lem}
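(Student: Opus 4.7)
The plan is to reduce all three statements to their well-known manifold analogues from \cite{MR0264551}, by exploiting the fact that near any nontrivial geodesic orbifold loop the free loop space $\Lambda\Orb$ admits a canonical Riemannian Hilbert manifold chart. Concretely, since $c$ is not constant at a singular point, the construction in Section \ref{sub:orb_loop} provides a geodesic development $\mathcal{D}=(M,\hat c)$ and a neighborhood $U_{\mathcal{D}}$ of $\hat c$ in $\Lambda M$ that maps isometrically onto a neighborhood of $c$ in $\Lambda_{\mathrm{reg}}\Orb$. The same manifold $M$ together with the $m$-fold iterate $\hat c^m$ is a geodesic development of $c^m$, so for every $m\in\N$ we obtain an analogous local chart $U_{\mathcal{D}_m}$ around $c^m$.

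Under this identification the energy functional, the reparametrization $S^1$-action, and the Riemannian Hilbert structure on $\Lambda\Orb$ near $c^m$ all pull back from the corresponding structures on $\Lambda M$ near $\hat c^m$. In particular, the second variation formula gives $\mathrm{ind}(c^m)=\mathrm{ind}(\hat c^m)$ and $\nu(c^m)=\nu(\hat c^m)$. Statements (i) and (ii) are then immediate consequences of Lemmas 1 and 2 in \cite{MR0264551} applied to the manifold geodesic $\hat c$ in $M$.

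For (iii), although $\Lambda(c^m)$ is defined globally as a sublevel set of the energy, the relative homology $H_p(\Lambda(c^m)\cup S^1 c^m,\Lambda(c^m))$ only captures information from an arbitrarily small neighborhood of the critical orbit $S^1 c^m$, by excision together with the deformation retraction provided by the negative gradient flow outside such a neighborhood. Since the reparametrization action on orbifold loops lifts to the reparametrization action on their developments, an $S^1$-invariant neighborhood of $S^1\hat c^m$ in $\Lambda M$ projects to an $S^1$-invariant neighborhood of $S^1 c^m$ in $\Lambda\Orb$, and the local homology computation is therefore identical to the one carried out in \cite[Cor.~1]{MR0264551} for $\hat c^m$ on $M$.

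The main point that I expect to require care is the compatibility of the reparametrization $S^1$-action with the chart $U_{\mathcal{D}_m}$: one needs this chart to cover, or at least to admit an $S^1$-equivariant extension covering, the entire compact orbit $S^1\hat c^m$ while still mapping injectively into $\Lambda\Orb$. This is not a serious obstacle since each reparametrization of $\hat c^m$ is itself a geodesic development of the corresponding reparametrization of $c^m$, so one may either enlarge $U_{\mathcal{D}_m}$ directly or cover $S^1\hat c^m$ by finitely many such charts differing only by reparametrization and glue the Gromoll–Meyer arguments across them. Once this local equivalence with $\Lambda M$ in an $S^1$-invariant tubular neighborhood of the critical orbit is in hand, nothing in the proofs of \cite[Lem.~1, Lem.~2, Cor.~1]{MR0264551} needs to be modified.
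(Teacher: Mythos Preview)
Your proposal is correct and follows essentially the same approach as the paper. The paper does not give a separate proof of this lemma; immediately before the statement it simply observes that the description of $\Lambda\Orb$ via developments makes clear that index and nullity of a nontrivial orbifold geodesic are defined as in the manifold case, and that the Gromoll--Meyer statements ``remain valid in the following form since their proofs involve only local arguments.'' Your argument spells out precisely this locality reduction---the chart $U_{\mathcal{D}_m}$ transferring index, nullity, and local critical-orbit homology from $\hat c^m$ in $\Lambda M$ to $c^m$ in $\Lambda\Orb$---and even addresses the $S^1$-equivariance detail that the paper leaves implicit.
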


\section{Homology generated by iterated geodesics}

We will need the following slight generalizations of statements in \cite{MR0715246} by Bangert and Klingenberg. The proofs are essentially the same as in \cite{MR0715246}. For convenience we summarize the arguments in the appendix, Section \ref{appendix}. Recall that a geodesic $c$ is called \emph{homologically invisible} if $H_*(\Lambda(c)\cup S^1c,\Lambda(c))= 0$.

\begin{thm}[{cf. \cite[Thm.~3]{MR0715246}}] 
\label{thm:invisible_img} Let $\Orb$ be a compact orbifold with isolated singularities and let $c$ be a closed geodesic on $\Orb$ such that $ind(c^m)=0$ for all $m \in \N$, i.e. $c$ does not have conjugate points when defined on $\R$. Suppose $c$ is neither homologically invisible nor an absolute minimum of $E$ in its free homotopy class. Then there exist infinitely many closed geodesics on $\Orb$. 
\end{thm}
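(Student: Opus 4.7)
The plan is to adapt the original argument of Bangert and Klingenberg \cite[Thm.~3]{MR0715246} to the orbifold setting. The adaptation is essentially formal: every iterate $c^m$ of the nontrivial geodesic $c$ lies in the regular part $\Lambda_{\mathrm{reg}}\Orb$ of the loop space, where $\Lambda\Orb$ has the structure of a Riemannian Hilbert manifold indistinguishable from the loop space of a Riemannian manifold, and Lemma \ref{lem:loop_local_structure_prop} supplies the local index, nullity, and loop-space homology statements about iterated geodesics needed at every step of the original proof.

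I would argue by contradiction, assuming that $\Orb$ carries only finitely many geometrically distinct closed geodesics of positive length. Since $c$ is not an absolute minimum of $E$ in its free homotopy class, there is a loop $\gamma_0$ freely homotopic to $c$ with $E(\gamma_0)<E(c)$, so for every $m\in\N$ the iterate $\gamma_0^m$ lies in $\Lambda(c^m)$ and is freely homotopic to $c^m$. Using the hypothesis that $c$ is not homologically invisible I would then transport a fixed nontrivial class $\alpha\in H_*(\Lambda(c)\cup S^1c,\Lambda(c))$ to nontrivial classes $\alpha_m\in H_*(\Lambda(c^m)\cup S^1c^m,\Lambda(c^m))$ along the arithmetic progression of $m$'s supplied by Lemma \ref{lem:loop_local_structure_prop}(ii), on which the nullity of $c^m$ stabilizes while the index remains zero by assumption. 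A min-max principle based on the class $\alpha_m$ and homotopies in $\Lambda\Orb$ that drag $S^1c^m$ through loops freely homotopic to $c^m$ down to $\gamma_0^m$ then produces a critical value $\kappa_m\ge E(c^m)$ realized by a closed geodesic $d_m$ on $\Orb$.

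Since $E(c^m)=m^2E(c)\to\infty$, the critical values $\kappa_m$ are unbounded, so under the contradiction hypothesis infinitely many of the $d_m$ would have to be iterates $d^{n_m}$ of a single geodesic $d$ from the assumed finite list. The final step, which is also the main obstacle, is to exclude this coincidence: combining the index/nullity information of Lemma \ref{lem:loop_local_structure_prop}(i)--(ii) with the fact that non-triviality of $\alpha_m$ forces the index of any critical point realizing $\kappa_m$ to lie in a narrow band around $\mathrm{ind}(c^m)=0$ via Lemma \ref{lem:loop_local_structure_prop}(iii), one rules out all but finitely many $n_m$ for each fixed $d$, contradicting the finiteness assumption. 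As every step takes place in open subsets of $\Lambda_{\mathrm{reg}}\Orb$ where $\Lambda\Orb$ is a Hilbert manifold, the argument of \cite{MR0715246} carries over without essential change; the detailed execution is postponed to Section \ref{appendix}.
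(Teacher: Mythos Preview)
Your sketch omits the central ingredient of the Bangert--Klingenberg argument and, as written, the min-max step does not produce what you claim. The heart of \cite[Thm.~3]{MR0715246} is not a min-max over homotopies dragging $c^m$ down to $\gamma_0^m$; it is the ``pulling one loop at a time'' construction (stated in the paper as Theorems~\ref{thm:loop_iteration} and~\ref{thm:homot_to_homol}), which shows that for suitable $m$ the image $\psi_*^m(\alpha)$ of any relative class $\alpha\in H_p(\Lambda_\kappa,\Lambda^\kappa)$ vanishes in $H_p(\Lambda_\kappa,\Lambda^{\kappa m^2})$. Your proposal never invokes or reproduces this construction, and without it the argument breaks at two places.

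First, the min-max you describe is not well-defined. A homotopy from $c^m$ to $\gamma_0^m$ is a path in $\Lambda$, i.e.\ a $1$-chain; it does not furnish a $(p{+}1)$-chain bounding a representative of $\alpha_m$. To run a min-max over fillings of $\alpha_m$ you must first know that $\alpha_m$ maps to zero in $H_p(\Lambda,\Lambda(c^m))$, and that is precisely the content of Theorem~\ref{thm:homot_to_homol}. Second, even granting hypothetical geodesics $d_m$ with $E(d_m)\to\infty$ and index in a bounded band, your final step does not yield a contradiction: if infinitely many $d_m$ are iterates $d^{n_m}$ of a single $d$, bounded index together with Lemma~\ref{lem:loop_local_structure_prop}(i) only forces $\mathrm{ind}(d^n)=0$ for all $n$, which places $d$ in the same situation as $c$ and proves nothing new.

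The paper's actual route is different in structure. Assuming finitely many towers, one first uses Lemma~\ref{lem:loop_local_structure_prop}(iii) to choose $p$ maximal so that some zero-index tower has $H_p\neq 0$, then Lemma~\ref{lem:loop_local_structure_prop}(ii) to obtain a set of moduli $\{k_1,\dots,k_s\}$ for which $\psi_*^m$ is an isomorphism on local $H_p$ whenever no $k_i$ divides $m$. Under the finiteness assumption, Lemma~\ref{lem:loop_local_structure_prop}(i) and the maximality of $p$ force $H_{p+1}(\Lambda(d)\cup S^1d,\Lambda(d))=0$ for every geodesic $d$ with $E(d)$ large, so Morse theory makes $i_*:H_p(\Lambda(c^m)\cup S^1c^m,\Lambda(c^m))\to H_p(\Lambda,\Lambda(c^m))$ injective. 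The contradiction then comes from Theorem~\ref{thm:homot_to_homol}, which says this composition must kill $\alpha$ for some admissible $m$; the non-minimality of $c$ enters only to ensure $\alpha$ lives in $H_p(\Lambda_\kappa,\Lambda^\kappa)$ with $\kappa=E(c)$ so that Theorem~\ref{thm:homot_to_homol} applies. You should replace the min-max paragraph by this injectivity-versus-vanishing dichotomy and explicitly invoke Theorems~\ref{thm:loop_iteration} and~\ref{thm:homot_to_homol}.
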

Note that if $\Orb$ is simply connected, then a nontrivial geodesic $c$ is never an absolute minimum in its free homotopy class.

\begin{lem}[{cf. \cite[Lem.~2]{MR0715246}}] \label{lem:img} Let $\Orb$ be a compact orbifold with isolated singularities and let $\{S^1c_i| i \in \N\}$ be a sequence of pairwise disjoint critical orbits such that the $c_i$ are not absolute minima of $E$ in their free homotopy classes. Suppose there exists $p \in \N$ such that $H_p(\Lambda(c_i)\cup S^1c_i, \Lambda(c_i)))\neq 0$ for all $i \in \N$. Then there exist infinitely many closed geodesics on $\Orb$. 
\end{lem}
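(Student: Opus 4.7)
The plan is to argue by contradiction. I would assume that $\Orb$ admits only finitely many geometrically distinct closed geodesics, with primitive representatives $\gamma_1,\ldots,\gamma_N$. Each $c_i$ in the given sequence is then a reparametrization of some iterate $\gamma_{l(i)}^{m_i}$. Since the orbits $S^1 c_i$ are pairwise disjoint and the number of primitives is finite, the pigeonhole principle supplies a single primitive $\gamma$ together with an infinite subsequence $(i_j)_{j\in\N}$ and multiplicities $m_j\to\infty$ such that $c_{i_j}=\gamma^{m_j}$ up to reparametrization. Because index, nullity, and local loop space homology are invariants of the underlying orbifold geodesic loop (via the description of $\Lambda\Orb$ in Section \ref{sub:orb_loop}), they transfer directly from $c_{i_j}$ to $\gamma^{m_j}$.

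Applying the dichotomy of Lemma \ref{lem:loop_local_structure_prop}(i) to the primitive $\gamma$ splits the argument into two cases. If $\mathrm{ind}(\gamma^m)$ grows linearly in $m$, then $\mathrm{ind}(\gamma^{m_j})\to\infty$, so for $j$ large enough one has $\mathrm{ind}(\gamma^{m_j})>p$. By Lemma \ref{lem:loop_local_structure_prop}(iii) this forces
\[
H_p\bigl(\Lambda(\gamma^{m_j})\cup S^1\gamma^{m_j},\,\Lambda(\gamma^{m_j})\bigr)=0,
\]
contradicting the hypothesis $H_p(\Lambda(c_{i_j})\cup S^1 c_{i_j},\Lambda(c_{i_j}))\neq 0$.

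In the remaining case $\mathrm{ind}(\gamma^m)=0$ for all $m\in\N$, I would apply Theorem \ref{thm:invisible_img} directly to the iterate $c_{i_1}=\gamma^{m_1}$. All iterates of $c_{i_1}$ satisfy $\mathrm{ind}((\gamma^{m_1})^m)=\mathrm{ind}(\gamma^{m_1 m})=0$, so $c_{i_1}$ has no conjugate points on $\R$; by hypothesis $c_{i_1}$ is neither homologically invisible (since $H_p\neq 0$) nor an absolute minimum of $E$ in its free homotopy class. Theorem \ref{thm:invisible_img} then produces infinitely many geometrically distinct closed geodesics, contradicting the finiteness assumption. Either way one reaches a contradiction, so the lemma follows.

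The main obstacle I anticipate is not the combinatorial dichotomy itself but the verification that every local structural statement of Bangert--Klingenberg carries over unchanged to the orbifold setting, i.e.\ that the local model of $\Lambda\Orb$ near a nontrivial geodesic loop is a Riemannian Hilbert manifold on which the classical growth and local-homology estimates apply. This is precisely the content of Section \ref{sub:orb_loop} and of Lemma \ref{lem:loop_local_structure_prop}, so once that local bookkeeping is recorded, the argument mirrors the proof of \cite[Lem.~2]{MR0715246} essentially line by line.
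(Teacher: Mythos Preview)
Your proposal is correct and follows essentially the same approach as the paper: assume finitely many prime closed geodesics, use pigeonhole and the index dichotomy of Lemma~\ref{lem:loop_local_structure_prop}(i) to reduce to a geodesic with $\mathrm{ind}(c^m)=0$ for all $m$, and then invoke Theorem~\ref{thm:invisible_img}. The paper's argument is phrased slightly differently (it first bounds $\mathrm{ind}(c_i)\le p$ via Lemma~\ref{lem:loop_local_structure_prop}(iii) and then observes this forces some tower to have vanishing index), but the logical content is the same.
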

Note that the geodesics $c_i$ in the lemma do not need to be geometrically distinct.

\section{Existence of simple closed geodesics}
\label{sec:curve_shortening}

The \emph{curve-shortening flow} can be used to prove the existence of a simple closed geodesic on any Riemannian $2$-sphere \cite{MR0979601}. In this section we discuss properties of the curve-shortening flow on Riemannian $2$-orbifolds that allow us to prove the existence of a \emph{separating geodesic} on every simply connected Riemannian spindle orbifold. Here a loop embedded in the regular part of a spindle orbifold is called \emph{separating} if each connected component of its complement contains at most one singular point. Let us first recall some well-known properties of the curve-shortening flow (cf. \cite{MR0979601,MR1888641,MR1731639,MeanCurv}). For a smoothly embedded curve $\gamma=\Gamma_0: S^1 \To M$ in a closed Riemannian surface there exists a unique maximal smooth curve-shortening flow $\Gamma_t : S^1 \To M$ for $t\in [0,T)$, $T>0$, satisfying $\frac{\partial \Gamma_t}{\partial t} = kN$ where $k$ is the curvature of $\Gamma_t$ and $N$ is its normal vector, which moreover depends continuously on the initial condition $\gamma$. This flow can be considered as the negative gradient flow of the length functional. An important feature of the curve-shortening flow is that it is a \emph{geometric flow} meaning that the evolution of the geometric image of $\gamma$ does not depend on the initial parametrization. In the situation above the curve $\Gamma_t$ is embedded for each $t\in [0,T)$. Moreover, if $T$ is finite, then $\Gamma_t$ converges to a point. If $T$ is infinite, then the curvature of $\Gamma_t$ converges to zero in the $C^{\infty}$-norm and a subsequence of $\Gamma_t$ converges to a closed embedded geodesic on $M$. In particular, $T$ is finite if the length of $\gamma$ is sufficiently small \cite[Lem.~7.1]{MR0979601}. If $M$ is not complete but the curvature is still bounded there is a single other alternative for finite $T$ namely that points on $\gamma$ do not have a limit on $M$ for $t\To T$. In particular, we see that the curve-shortening flow of an embedded curve in the regular part of a compact $2$-orbifold is (a priori) defined until the flow hits a singular point or collapses to a point. In fact, if the $2$-orbifold has only isolated singularities more is true as will be discussed below. One possibility to analyse the local behaviour of a curve-shortening flow $\Gamma_t : S^1 \To M$ on a manifold at a space-time point $(x,T)$, $ T < \infty$, is to blow up the flow at $(x,T)$ by a sequence of parabolic rescalings $((M,g),t) \To ((M,\lambda_1 g),\lambda_i^2 (t-T)+T)$, $\lambda_i \To \infty$, where $g$ denotes the Riemannian metric. Such a blow-up sequence subconverges to a self-shrinking tangent flow of an embedded curve on $T_x M$ (see \cite{MeanCurv}) which, according to a result of Abresch and Langer \cite{MR0845704}, is either a self-shrinking circle or a static straight line through the origin. In the first case $\Gamma_t$ converges to $x$ as a ``round point'' and in the second case $\Gamma_t$ is regular at $(x,T)$. Another feature of the curve-shortening flow is that it satisfies the so-called \emph{avoidance principle}, meaning that two initially disjoint curves remain disjoint under the flow. This is a consequence of the maximum principle. Moreover, by the strong maximum principle for parabolic PDE \cite[Thm.7.1.12]{MR2597943}, it is impossible for a closed curve to be disjoint from a (possible noncompact) geodesic for $t<t_0$ and to touch it tangentially at $t=t_0$.

The following statement is proven in \cite[Cor~1.7]{MR0979601} and says that the curve-shortening flow cannot spread out arbitrarily in finite time.

\begin{lem}[Grayson] \label{prp:area_rate}
If $T < \infty$, then for every $\varepsilon >0 $ there exists $t_1<T$, and an open set $U$ in $M$ such that $U$ contains every $\Gamma_t(S^1)$, $t_1<t<T$, and $U$ is contained in the $\varepsilon$-neighborhood of each $\Gamma_t(S^1)$, $t_1<t<T$.
\end{lem}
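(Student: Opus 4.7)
The plan is to show that the image curves $\Gamma_t(S^1)$ form a Cauchy family in the Hausdorff distance as $t \to T$. Once this is established, for any $\varepsilon > 0$ one can take $t_1 < T$ so that $d_H(\Gamma_t(S^1), \Gamma_{t^*}(S^1)) < \varepsilon/3$ for all $t, t^* \in (t_1, T)$, fix such a $t^*$, and set $U$ equal to the open $(\varepsilon/2)$-neighborhood of $\Gamma_{t^*}(S^1)$. The triangle inequality then gives $\Gamma_t(S^1) \subset U$ and $U \subset B_\varepsilon(\Gamma_t(S^1))$ simultaneously for every $t \in (t_1, T)$, which is precisely the content of the lemma.

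The main input is length monotonicity: $\frac{d}{dt} L(\Gamma_t) = -\int_{\Gamma_t} k^2 \, ds \leq 0$, so $L(\Gamma_t)$ decreases to a limit $L_\infty \in [0, L(\Gamma_0)]$ and
\[
\int_0^T \int_{\Gamma_\tau} k^2 \, ds \, d\tau = L(\Gamma_0) - L_\infty < \infty.
\]
By absolute continuity, for every $\eta > 0$ there exists $t_1 < T$ with $\int_{t_1}^T \int_{\Gamma_\tau} k^2 \, ds \, d\tau < \eta$ and $L(\Gamma_\tau) < L_\infty + \eta$ on $[t_1, T)$. Parametrizing each $\Gamma_\tau$ by (constant-speed scaled) arclength and using that the flow moves points at normal speed $|k|$, Cauchy--Schwarz gives, for every parameter $p$ and $t_1 \leq s < t < T$,
\[
d(\Gamma_s(p), \Gamma_t(p)) \leq \int_s^t |k(p, \tau)| \, d\tau \leq \sqrt{t - s}\, \Bigl(\int_s^t k^2(p, \tau) \, d\tau\Bigr)^{1/2}.
\]

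The main obstacle is to upgrade this parameter-by-parameter bound to a uniform Hausdorff estimate: integrating over $p$ via Fubini only controls the \emph{average} displacement, while $L^2$-smallness of the curvature does not a priori preclude isolated parameters from moving a long distance. I would resolve this with a two-scale argument. For an arbitrary point $x = \Gamma_s(p_0)$, Markov's inequality on an arc $I \subset S^1$ of arclength $\delta$ centered at $p_0$ produces a parameter $p_1 \in I$ with $\int_s^t k^2(p_1, \tau) \, d\tau \leq C\eta/\delta$; the Cauchy--Schwarz bound above then places $\Gamma_t(p_1)$ within distance $\sqrt{C(t-s)\eta/\delta}$ of $\Gamma_s(p_1)$, hence within $\delta + \sqrt{C(t-s)\eta/\delta}$ of $x$. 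Optimizing with $\delta = ((t-s)\eta)^{1/3}$ and applying the symmetric argument swapping $s$ and $t$ yields $d_H(\Gamma_s(S^1), \Gamma_t(S^1)) = O((T\eta)^{1/3})$; choosing $\eta$ small enough makes this smaller than $\varepsilon/3$, as required.
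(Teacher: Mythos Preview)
The paper does not supply its own proof of this lemma; it is quoted directly from Grayson and cited as \cite[Cor.~1.7]{MR0979601}. So there is no ``paper's proof'' to compare against, and I evaluate your argument on its own terms.

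Your reduction to a Hausdorff--Cauchy statement is fine, and the two-scale idea is sound. There is, however, a genuine gap in the parametrization bookkeeping. You assert simultaneously that each $\Gamma_\tau$ is parametrized by constant-speed (scaled) arclength \emph{and} that a fixed parameter $p$ moves with speed $|k(p,\tau)|$. These two claims are incompatible. Under the flow parametrization $\partial_t\Gamma=kN$ the arclength element evolves by $\partial_t(ds)=-k^2\,ds$, so constant speed is not preserved; if instead you reparametrize to constant speed at every time, the motion of a fixed parameter acquires a tangential component and the displacement bound $d(\Gamma_s(p),\Gamma_t(p))\le \int_s^t|k(p,\tau)|\,d\tau$ is simply false.

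The same issue contaminates your Markov step. The quantity you control is $\int_{t_1}^{T}\int_{\Gamma_\tau}k^2\,ds\,d\tau$, an integral against the \emph{time-varying} arclength element $ds=v(p,\tau)\,dp$. To apply Markov on a parameter arc $I$ you need $\int_I\int_s^t k^2(p,\tau)\,d\tau\,dp$ small, which is a different object once $v$ is allowed to degenerate. The argument can be repaired: stay in the flow parametrization (so the displacement estimate is honest), use the identity $\int_s^t k^2(p,\tau)v(p,\tau)\,d\tau=v(p,s)-v(p,t)$ together with $\int_I\bigl(1-v(p,t)\bigr)\,dp\le L_s-L_t<\eta$, and note that $1-e^{-F}\ge\tfrac12$ whenever $F(p):=\int_s^t k^2(p,\tau)\,d\tau\ge 1$, which bounds the parameter set where $F$ is large. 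With that extra care your two-scale estimate goes through; as written, the sentence ``Markov's inequality \dots\ produces a parameter $p_1\in I$ with $\int_s^t k^2(p_1,\tau)\,d\tau\le C\eta/\delta$'' is not justified.
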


Now we analyse the evolution under the curve-shortening flow of a separating loop on a simply connected spindle orbifold.

\begin{lem}\label{prp:flow_orbifold} Let $\Orb\cong S^2(p,q)$ be a Riemannian spindle orbifold, let $\gamma$ be a separating loop on $\Orb$ and let $\Gamma: S^1 \times [0,T) \To \Orb_{\mathrm{reg}}$ be the evolution of $\gamma$ under the curve-shortening flow in the regular part of $\Orb$. If $\Gamma$ hits a singular point $x$ of order $p>2$ in finite time $T$, then the flow converges to this point.
\end{lem}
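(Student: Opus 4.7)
The plan is an argument by contradiction in the manifold cover near $x$, combining the Abresch--Langer classification of embedded self-shrinkers with a Jordan-curve argument that uses $p > 2$ in an essential way.

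First I would pass to a small orbifold chart $U \cong D / \mathbb{Z}_p$ around $x$, where $D$ is a small disk in the corresponding Riemannian cover, branched over $x$. Suppose for contradiction that $\Gamma_t$ does not Hausdorff-converge to $\{x\}$ as $t \to T$. Then for $t$ close to $T$ the intersection $\Gamma_t \cap U$ contains a connected component $\alpha_t$ that is a proper embedded arc with endpoints on $\partial U$ and an interior point approaching $x$, but which is not all of $\Gamma_t$. The preimage of $\alpha_t$ under the covering $D \setminus \{0\} \to U \setminus \{x\}$ is the disjoint union of $p$ embedded arcs $\tilde\alpha_t^{(0)}, \ldots, \tilde\alpha_t^{(p-1)}$ in $D \setminus \{0\}$, cyclically permuted by the $\mathbb{Z}_p$-action; each has endpoints on $\partial D$ (hence bounded away from $0$), evolves by the curve-shortening flow in $D$, and has some interior point approaching $0$ as $t \to T$.

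I would then analyse the parabolic blow-up of each $\tilde\alpha_t^{(k)}$ at $(0,T)$. By Abresch--Langer \cite{MR0845704}, the tangent flow of an embedded curve-shortening flow in $\R^2$ is either a shrinking circle centered at $0$ or a static straight line through $0$. The circle case would require $\tilde\alpha_t^{(k)}$ to collapse to $0$, which is impossible since its two endpoints lie on $\partial D$ at fixed positive distance from $0$. Hence each $\tilde\alpha_t^{(k)}$ has tangent flow equal to a straight line $\ell_k$ through $0$; by $\mathbb{Z}_p$-equivariance the $\ell_k$ are the rotates of $\ell_0$ by $2\pi k / p$, and since $p > 2$ no line through $0$ is invariant under rotation of order $p$, so the $\ell_k$ are pairwise distinct.

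Finally I would deduce a topological contradiction in a small disk $B_\rho(0) \subset D$. For $t$ close enough to $T$ (along a subsequence on which the blow-ups converge) each $\tilde\alpha_t^{(k)}$ meets $\partial B_\rho$ transversely in exactly two points that are $C^0$-close to the antipodal pair $\ell_k \cap \partial B_\rho$, so the $2p$ crossings decompose into $p$ pairs of near-antipodes, one pair per line $\ell_k$. Any two distinct diameters of a round disk have cyclically interleaved endpoints on its boundary circle, and by the Jordan curve theorem in $B_\rho$ any two embedded arcs with interleaved boundary endpoints must meet in the interior, contradicting pairwise disjointness of the $\tilde\alpha_t^{(k)}$. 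The main obstacle is producing uniform $C^0$-control of the approach of each $\tilde\alpha_t^{(k)}$ to $\ell_k$ on an annulus around $0$, so that the interleaving on $\partial B_\rho$ is genuine; once this is in place the remaining argument is purely topological, and the assumption $p > 2$ enters exactly through the non-invariance of any line through $0$ under rotation by $2\pi/p$.
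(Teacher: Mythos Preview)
Your argument is correct in spirit and uses the same core mechanism as the paper---the $\mathbb{Z}_p$-symmetry with $p>2$ forces the tangent flow at the singular point to be a circle rather than a line---but the paper organises it more economically. Instead of working in a local chart, the paper first invokes Grayson's containment lemma (Lemma~\ref{prp:area_rate}) to find a point $y\neq x$ avoided by the late flow (taken to be the other singular point if there is one), and then passes to the $p$-fold manifold cover $M\to\Orb\setminus\{y\}$. Because $\Gamma_t$ is separating and winds once around $x$, its full preimage in $M$ is a \emph{single} embedded closed curve $\hat\Gamma_t$, invariant under the deck group $G\cong\mathbb{Z}_p$. One blows up this single closed curve at $(\hat x,T)$; the tangent flow is then automatically $G$-invariant, and since a line through the origin is not invariant under rotation by $2\pi/p$ for $p>2$, Abresch--Langer forces it to be the shrinking circle, hence $\hat\Gamma_t\to\hat x$. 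This bypasses exactly the obstacle you flag: no uniform $C^0$-control on an annulus and no interleaving Jordan-curve argument are needed, because the contradiction comes directly from $G$-invariance of one tangent flow rather than from comparing $p$ separate ones. Your $p$ disjoint arcs are in fact just the intersection of this single $\hat\Gamma_t$ with a small disk about $\hat x$; recognising that they assemble into one $G$-invariant closed curve is what makes the paper's version short.
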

\begin{proof} By Lemma \ref{prp:area_rate} we can assume that there exists a point $y\neq x$ on $\Orb$ which is avoided by $\Gamma$ such that each $\Gamma_t$ separates $x$ and $y$. If $\Orb$ has two singular points, we choose $y$ to be a singular point. The open subset $\Orb \backslash \{y\}$ of $\Orb$ admits a $p$-fold manifold covering $M$ with a cyclic group of deck transformations $G$ of order $p$ that acts by rotations around the preimage $\hat x$ of $x$ in $M$. The preimages $\hat \Gamma_t$ of $\Gamma_t(S^1)$ in $M$ are embedded $G$-invariant loops (or loops after choosing a parametrization, which does not make a difference for us though since we are dealing with a geometric flow) and are solutions of the curve-shortening flow. By parabolically blowing up this flow at $(\hat x,T)$ as discussed above we obtain a $G$-invariant tangent flow. Because of $p=|G|>2$ this tangent flow must be a circle and so $\hat \Gamma_t$ converges to $\hat x$. For, by transversality we would otherwise obtain a contradiction to the embeddedness of the circles $\hat \Gamma_t$ for $t<T$. In particular, it follows that $\Gamma_t$ converges to $x$ in the limit $t\rightarrow T$.
\end{proof}

\begin{lem}\label{prp:flow_orbifold_gen} Let $\Orb\cong S^2(p,q)$ be a simply connected Riemannian spindle orbifold, let $\gamma$ be a separating loop on $\Orb$ and let $\Gamma: S^1 \times [0,T) \To \Orb_{\mathrm{reg}}$ be the maximal evolution of $\gamma$ under the curve-shortening flow in the regular part of $\Orb$. Then the curve $\gamma$ either
\begin{enumerate}
\item shrinks to a round point in the regular part of $\Orb$ in finite time $T$, or
\item collapses into a singular point of $\Orb$ in finite time $T$, or
\item $T=\infty$ and $\gamma$ stays in the regular part of $\Orb$ forever.
\end{enumerate}
Note that the first case can only occur if $\Orb$ has at most one singular point.
\end{lem}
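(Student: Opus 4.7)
The plan is to split according to whether $T$ is finite or infinite. If $T=\infty$, then by definition of the maximal flow in the regular part we are in case (iii). Suppose $T<\infty$. Applying Lemma~\ref{prp:area_rate} with $\varepsilon$ smaller than half the distance between the (at most two) isolated singular points of $\Orb$ gives an open set $U\subset\Orb$ containing all $\Gamma_t(S^1)$ close to $T$ and lying in the $\varepsilon$-neighborhood of each, so $U$ meets the singular locus in at most one point. Passing to a Hausdorff-convergent subsequence produces a compact connected limit set $L\subset\overline U$. If $L\subset\Orb_{\mathrm{reg}}$, then a compact neighborhood of $L$ lies in the regular part, where the curvature is bounded, and by the non-complete alternative recalled before Lemma~\ref{prp:area_rate} maximality of $T$ forces $L=\{x_0\}$ with $x_0\in\Orb_{\mathrm{reg}}$; an Abresch--Langer blow-up at $(x_0,T)$ must then produce a shrinking circle (a static line tangent flow would mean the flow extends regularly past $T$ in $\Orb_{\mathrm{reg}}$, contradicting maximality), so $x_0$ is a round point and we are in case (i).

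Otherwise $L$ contains exactly one singular point $x$, of some order $p$. Let $y\neq x$ be the other singular point of $\Orb$ if it exists, or, in the teardrop case, a regular point disjoint from $L$ (which exists by Lemma~\ref{prp:area_rate} for $\varepsilon$ small). The flow lifts to a $G$-invariant embedded curve-shortening flow $\hat\Gamma_t$ on the manifold $p$-fold cover $M\to\Orb\setminus\{y\}$, where $G=\Z/p$ acts by rotations around the preimage $\hat x$ of $x$. For $p>2$, Lemma~\ref{prp:flow_orbifold} directly yields $\Gamma_t\to x$, i.e., case (ii). The main obstacle is the case $p=2$: a straight line through $\hat x$ \emph{is} $\Z/2$-invariant, so the symmetry argument in the proof of Lemma~\ref{prp:flow_orbifold} no longer excludes a static line tangent flow at $(\hat x,T)$. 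Here I would argue topologically. Since $\Gamma_t$ separates $x$ from $y$ and encircles $x$ once, it represents the nontrivial element of $\pi_1^{\mathrm{orb}}(\Orb\setminus\{y\})=\Z/2$, so its set-theoretic preimage $\hat\Gamma_t$ is a single connected embedded loop in $M$ that double-covers $\Gamma_t$, with the deck involution $\tau$ acting freely on $\hat\Gamma_t$ for $t<T$. A static line tangent flow at $(\hat x,T)$ would make $\hat\Gamma_T$ a smooth embedded $\tau$-invariant loop passing regularly through $\hat x$; the induced involution on $\hat\Gamma_T\cong S^1$ would fix precisely the preimages of $\hat x$ on the loop, i.e.\ exactly one point by embeddedness, whereas any involution of $S^1$ has either $0$ or $2$ fixed points. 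This contradiction forces the tangent flow to be a shrinking circle, so $\hat\Gamma_t\to\hat x$ and hence $\Gamma_t\to x$, giving case (ii).

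Finally, the parenthetical remark is a separate topological observation: on $S^2(p,q)$ with $p,q>1$ the embedded flow in $\Orb_{\mathrm{reg}}$ never crosses a singular point, so the separating property of $\Gamma_t$ is preserved in time and one singularity lies on each side of $\Gamma_t(S^1)$. Shrinking $\Gamma_t$ to a regular point $x_0$ would eventually force one component of $\Orb\setminus\Gamma_t$ to be a small topological disk around $x_0$ containing no singular point, so both singularities would have to lie in the other component, contradicting the separating property.
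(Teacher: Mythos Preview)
Your proof is correct and follows the same strategy as the paper: lift to the $p$-fold manifold cover $M$ of $\Orb\setminus\{y\}$ and derive a contradiction between embeddedness of the lifted flow and $G$-equivariance at the fixed point $\hat x$. For the crucial case $p=2$ you phrase the contradiction as ``an involution of $S^1$ cannot have exactly one fixed point'', whereas the paper observes that the deck involution acts freely on the parameter circle (by $s\mapsto s+\tfrac12$), so an embedded $\hat\Gamma_T$ would have to pass through $\hat x$ at two distinct parameter values; these are two phrasings of the same obstruction.

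One step you leave implicit deserves to be stated: that $\hat\Gamma_T$ exists as a \emph{global} smooth embedded loop, not merely locally near $\hat x$. A static-line tangent flow at $(\hat x,T)$ only gives local regularity there. The paper handles this directly and without any tangent-flow analysis: since $\hat\Gamma_t$ stays in a compact region of the manifold $M$ (the flow avoids a neighborhood of $y$) and does not converge to a point, Grayson's theorem extends the flow on $M$ to some $T'>T$, so $\hat\Gamma_T$ is automatically a smooth embedded loop. With this extension in hand the blow-up dichotomy becomes unnecessary and the self-crossing (equivalently, fixed-point-count) contradiction is immediate, making the paper's route slightly shorter.
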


\begin{proof} We only have to exclude the case that $\Gamma$ hits a singular point $x$ of $\Orb$ in finite time $T<\infty$ without collapsing into this point. Suppose this were the case. By Lemma \ref{prp:flow_orbifold} we can assume that the order $p$ of $x$ is even and that the flow does not approach a singular point of odd order at time $T$. Since $p$ and $q$ are coprime by assumption, the order $q$ must be odd. We can assume that there exists a point $y\neq x$ on $\Orb$ such that the flow avoids a whole neighborhood of $y$, such that each $\Gamma_t$ separates $x$ and $y$, and such that $\Orb \backslash \{x,y\}$ lies in the regular part of $\Orb$. In fact, for $q>1$ we can choose $y$ to be the singular point of odd order $q$, and otherwise we can apply Lemma \ref{prp:area_rate}. As in the proof of Lemma \ref{prp:flow_orbifold} the open subset $\Orb \backslash \{y\}$ of $\Orb$ admits a $p$-fold manifold covering $M$ with a cyclic group of deck transformations $G$ of order $p$ that acts by rotations around the preimage $\hat x$ of $x$ in $M$. Also, the preimages $\hat \Gamma_t$ of $\Gamma_t(S^1)$ in $M$ are embedded $G$-invariant circles (or loops after choosing a parametrization) and are solutions of the curve-shortening flow. Since $\Gamma$ avoids a neighborhood of $y$ and since $\hat \Gamma_t$ does not converge to a point at time $T$, the flow $\hat \Gamma_t$ can be extended to a flow $\hat \Gamma : S^1 \times [0,T') \To M$ with $T<T'$. Now the fact that $\Gamma$ hits $x$ at time $T$ implies by $G$-equivariance that the extended flow $\hat \Gamma$ develops a self-crossing at time $T$. This contradicts embeddedness and hence the claim follows.
\end{proof} 
Using compactness and the fact that the curvature converges to zero in infinite time, the following lemma can be proven as in the manifold case \cite[Sec.~7]{MR0979601}.
\begin{lem}\label{lem:sub_to_min_geo} In the situation of the preceding Proposition, case $(iii)$, the loop subconverges to a nontrivial (orbifold) geodesic.
\end{lem}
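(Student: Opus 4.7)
The plan is to follow Grayson's argument for the manifold case \cite[Sec.~7]{MR0979601}, treating $\Gamma_t$ as a curve-shortening flow on the Riemannian manifold $\Orb_{\mathrm{reg}}$ and separately ruling out degenerate limits at singular points. Since the flow remains in $\Orb_{\mathrm{reg}}$ for all $t \in [0,\infty)$, the standard theory recalled at the start of Section \ref{sec:curve_shortening} applies locally and gives that the geodesic curvature $k_t$ of $\Gamma_t$ tends to zero in $C^\infty$ as $t \to \infty$. The length $L(t) = L(\Gamma_t)$ is nonincreasing, so $L(t) \searrow L_\infty$ for some $L_\infty \in [0, L(0)]$.

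The heart of the argument is to show $L_\infty > 0$, which I would do by contradiction. If $L_\infty = 0$, then $\mathrm{diam}(\Gamma_t(S^1)) \to 0$ in $\Orb$, and by compactness the images accumulate at a single point $x \in \Orb$. If $x$ lies in the regular part, then for $t$ large $\Gamma_t(S^1)$ is contained in a totally normal geodesic ball around $x$; Grayson's short-curve extinction \cite[Lem.~7.1]{MR0979601} then forces the flow to become extinct in finite additional time, contradicting $T = \infty$. If instead $x$ is a singular point, I would mimic the proofs of Lemmas \ref{prp:flow_orbifold} and \ref{prp:flow_orbifold_gen}: choose $y \neq x$ so that a whole neighborhood of $y$ is eventually avoided by $\Gamma_t$, pass to the $p$-fold manifold cover $M$ of $\Orb \setminus \{y\}$, and note that the $G$-invariant lifts $\hat\Gamma_t$ of $\Gamma_t$ are embedded curves of length $pL(t) \to 0$ accumulating at the fixed point $\hat x \in M$; applying the same short-curve extinction around $\hat x$ in $M$ again contradicts $T = \infty$.

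Once $L_\infty > 0$ is established, I parametrize each $\Gamma_t$ with constant speed on $S^1$. The uniform $C^\infty$ smallness of $k_t$ together with the uniform upper bound on length yields uniform $C^\infty$ bounds on these parametrizations, so Arzel\`a--Ascoli plus a standard diagonal argument extracts a subsequence $\Gamma_{t_n}$ converging in $C^\infty$ to a closed curve $\gamma_\infty \colon S^1 \to \Orb$ of length $L_\infty > 0$. Since $k_t \to 0$ in $C^\infty$, the curvature of $\gamma_\infty$ vanishes identically, so $\gamma_\infty$ is a nontrivial closed geodesic; even if it happens to meet a singular point, it is still an orbifold geodesic in the sense of Definition \ref{dfn:orb_geo}.

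The main obstacle I expect is the singular-point branch of the nontriviality step, where one must set up the (noncompact) manifold cover $M$ carefully and verify that Grayson's extinction for short embedded curves goes through equivariantly around $\hat x$, so that the collapse of $\hat\Gamma_t$ really happens in finite, rather than infinite, time.
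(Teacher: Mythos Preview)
Your proposal is correct and takes essentially the same approach as the paper, which simply says the lemma can be proven as in the manifold case \cite[Sec.~7]{MR0979601} using compactness and the $C^{\infty}$ decay of the curvature in infinite time. Your extra step---ruling out collapse to a singular point in infinite time by lifting to the manifold cover already used in Lemmas~\ref{prp:flow_orbifold} and~\ref{prp:flow_orbifold_gen} and applying Grayson's short-curve extinction near $\hat x$---is a natural elaboration of the paper's terse reference and goes through without difficulty, since that extinction statement is local and requires only a small convex ball around $\hat x$ in $M$.
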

Note that the limit geodesic obtained in Lemma \ref{lem:sub_to_min_geo} is a priori not necessarily contained in the regular part of $\Orb$.

\begin{prp} \label{prp:simple_geodesic} On every simply connected Riemannian spindle orbifold $\Orb \cong S^2(p,q)$ there exists a separating geodesic. In particular, there exists a closed geodesic in the regular part of any spindle orbifold.
\end{prp}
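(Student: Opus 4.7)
The plan is a Birkhoff--Grayson style min-max argument on the space of separating loops, with the trichotomy of Lemma~\ref{prp:flow_orbifold_gen} playing the role that Grayson's theorem plays in the manifold case. Concretely, I would consider continuous one-parameter families $\{\gamma_s\}_{s\in[0,1]}$ on $\Orb$ in which $\gamma_s$ is a separating loop for each $s\in(0,1)$, while $\gamma_0$ and $\gamma_1$ degenerate to constant loops at the two singular points (or at the singular point and some regular point if $\Orb$ is a teardrop), and set $W:=\inf\max_s L(\gamma_s)$, the infimum being taken over all such families. The first observation is that $W>0$: letting $A(s)$ denote the area of the component of $\Orb\setminus \gamma_s$ containing the singular point of order $p$, the function $A$ is continuous with $A(0)=0$ and $A(1)=\mathrm{Area}(\Orb)$, so some $\gamma_{s^*}$ bounds a region of area exactly $\mathrm{Area}(\Orb)/2$; an isoperimetric inequality on the compact orbifold $\Orb$ then gives a uniform positive lower bound for $L(\gamma_{s^*})$, and hence for $W$.

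Next I would take a minimizing sweepout $\{\gamma_s^{(n)}\}$ with $\max_s L(\gamma_s^{(n)}) \to W$ and apply the curve-shortening flow to each individual loop. By Lemma~\ref{prp:flow_orbifold_gen}, each flow either shrinks to a round point in $\Orb_{\mathrm{reg}}$, collapses into a singular point, or persists for all times; in the first two cases the length drops to zero in finite time, so such loops cannot sustain length close to $W$. A standard Birkhoff--Grayson min-max argument, using continuity of CSF in the initial data, the avoidance principle, and Lemma~\ref{prp:area_rate}, then produces indices $n_k$, parameters $s_k$ and times $t_k$ along which the flowed loops have length converging to $W$ and curvature converging to zero. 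Passing to a subsequence and invoking Lemma~\ref{lem:sub_to_min_geo} produces in the limit a closed geodesic $c$ on $\Orb$ of length exactly $W>0$.

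It remains to show that $c$ lies in $\Orb_{\mathrm{reg}}$ and is separating. If $c$ passed through a singular point $x$, then by \cite[Thm.~3]{MR2687544} the curve $c$ would fail to be locally length-minimizing at $x$, so a small arc of $c$ around $x$ could be replaced by a shorter arc in $\Orb_{\mathrm{reg}}$ avoiding $x$, producing a loop $\tilde c$ with $L(\tilde c) < W$. Topologically $c$ separates $\Orb$ into two disks meeting along $c$ with $x$ on their common boundary, and exactly one of the two ways of pushing the perturbed arc off $x$ leaves $x$ in the component not containing the other singular point; that choice makes $\tilde c$ a genuine separating loop, contradicting the definition of $W$. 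The same local-shortening trick, applied at any self-crossing of $c$, also rules out self-intersections, so $c$ is embedded and separating. The main technical obstacle I foresee is the min-max extraction itself: one has to orchestrate CSF on a whole family of loops in a controlled way so that the sup-length along the sweepout stays close to $W$ long enough to harvest a geodesic of length exactly $W$, rather than having every loop in the sweepout collapse into a singular point under the flow.
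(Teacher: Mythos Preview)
Your overall strategy is close to the paper's: both run the curve-shortening flow on a one-parameter family of separating loops, invoke the trichotomy of Lemma~\ref{prp:flow_orbifold_gen} to find a loop that persists for all time, and then use Lemma~\ref{lem:sub_to_min_geo} to extract a limit geodesic $c$. (The paper avoids the full min-max machinery by a simpler intermediate-value argument: foliate $\Orb\setminus\{x,y\}$ by separating circles, note that the small circles near $x$ and near $y$ collapse to round points with \emph{opposite} orientations, and conclude by continuous dependence that some leaf in between cannot collapse at all.)

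The genuine gap is in your argument that $c$ lies in the regular part. You say that if $c$ passed through a singular point $x$ one could shorten a small arc to get a separating loop $\tilde c$ with $L(\tilde c)<W$, ``contradicting the definition of $W$.'' But $W$ is a \emph{min-max} over sweepouts, not an infimum of lengths of separating loops; separating loops of arbitrarily small length always exist (tiny circles around a singular point), so $L(\tilde c)<W$ contradicts nothing. To contradict $W$ you would need an entire sweepout with $\max_s L<W$, and a single short loop does not supply one. There is also a topological problem with the perturbation itself: when the order of $x$ is even the orbifold geodesic is \emph{reflected} at $x$ (it enters and leaves along the same ray), so pushing the arc off $x$ does not yield an embedded curve, let alone a separating one. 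The paper handles this step by an entirely different mechanism: it lifts a neighbourhood of $x$ to the $p$-fold manifold cover, observes that the lifts of the approximating embedded separating loops $\gamma_i$ are $p$ pairwise disjoint embedded arcs, and shows that the lifts $\hat c^j$ of $c$ must intersect \emph{transversally} at $\hat x$ (immediate from equivariance when $p>2$; for $p=2$ one first argues that $c$ must be reflected at $x$ twice from different directions during one period, since $p$ and $q$ are coprime and reversal of direction forces an even-order reflection). Transversality of the limiting curves against disjointness of the approximating ones gives the contradiction, with no reference to the value $W$.
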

\begin{rem} The first statement is optimal in the sense that there exist Riemannian metrics on $S^2(p,q)$ all of whose geodesics are closed but with only one embedded geodesic \cite{Lange2}. 
\end{rem}
\begin{proof}Choose distinct $x,y \in \Orb$ such that $\Orb \backslash \{x,y\}$ is contained in the regular part of $\Orb$. We smoothly foliate $\Orb \backslash \{x,y\}$ by circles separating $x$ and $y$. Then, under the curve-shortening flow small circles near $x$ flow to a point in finite time, and so do small circles near $y$ \cite[Lem.~7.1]{MR0979601}. However, the orientations of the limiting points will be opposite in both cases. Hence, by continuous dependence of the flow on the initial conditions there must be some circle $\gamma$ in the middle that does not flow to a point in finite time, but instead stays in the regular part of $\Orb$ forever by Proposition \ref{prp:flow_orbifold}. By Lemma \ref{lem:sub_to_min_geo} this circle subconverges to some nontrivial orbifold geodesic $c$.

It remains to show that $c$ is contained in the regular part. In fact, in this case $c$ is embedded and separating as a limit of embedded and separating loops. Let $\gamma_i$ be a subsequence of the curve-shortening flow $\Gamma_t$ that converges to $c$. Suppose that $c$ hits a singular point, say $x$ of order $p$. We can assume that the $\gamma_i$ and $c$ avoid a neighborhood of a point $y'$ on $\Orb$. The open subset $\Orb \backslash \{y'\}$ of $\Orb$ admits a $p$-fold orbi-cover $\hat \Orb$ with a cyclic group of deck-transformations $G$ of order $p$ that acts by rotations around the preimage $\hat x$ of $x$ in $\hat \Orb$. Let $s_i \in S^1$ be a sequence such that $\gamma_i(s_i)$ converges to some $c(s)\neq x$, $s\in S^1$. The restrictions of $\gamma_i$ to $S^1\backslash \{s_i\}$ and of $c$ to $S^1\backslash \{s\}$ can be lifted to embedded curves $\hat \gamma_i^j : S^1\backslash \{s_i\} \To \hat \Orb_{\mathrm{reg}}$, $j=1,\ldots,p$, and to geodesics $\hat c^j: S^1\backslash \{s\} \To \hat \Orb$, $j=1,\ldots,p$, that are permuted by the deck transformation group $G$. We can choose the $s_i$, $s$ and the $j$-numbering in such a way that $\hat \gamma_i^j$ converges to $\hat c^j$. Note that for fixed $i$ the $\hat \gamma_i^j$ have disjoint images in $\hat \Orb$ since $\gamma_i$ is embedded in $\Orb$. For $p>2$ this yields a contradiction since the geodesics $\hat c^j$ intersect transversally at $\hat x$ in this case. Suppose that $p=2$. In this case $\Orb$ is rotated by $\pi$ around $\hat x$ by the deck transformation group and the orbifold geodesic $c$ is reflected at the singular point $x$ on $\Orb$. The only way for $c$ to reverse its direction is by being reflected at a singular point of even order. Hence, by periodicity it has to be reflected at singular points of even order twice during a single period. Since $p$ and $q$ are coprime by assumption, this second reflection also has to occur at $x$. Moreover, this second reflection has to occur from a different direction, because otherwise there would have to be an additional encounter with the singularity of even order in between. Therefore, the $\hat c_j$ have transverse self-intersections at $\hat x$. Now, in this case the transversality argument from the case $p\geq 3$ yields a contradiction since the $\hat \gamma^j_i$ are embedded and hence the claim follows.
\end{proof}

\section{Proof of the main result}

As seen in Section \ref{sub:Riem_orb} it is sufficient to prove our main result for simply connected spindle orbifolds. Given the results from the preceding section, in this case a proof can be given similarly as in \cite{MR1209957} in the case of a $2$-sphere as we will discuss now.

\subsection{Outline of the proof}\label{sub:outline}
Let $\Orb \cong S^2(p,q)$ be a simply connected Riemannian spindle orbifold. By Lemma \ref{prp:simple_geodesic} there exists a separating geodesic $c: S^1=\R/\Z \To \Orb$. Suppose the following two conditions are satisfied.
\begin{compactenum}
\item For every geodesic $d:[0,\infty) \To \Orb$ with initial point $d(0)$ on $c(S^1)$ there exists $t>0$ with $d(t) \in c(S^1)$.
\item When we consider $c$ as defined on $\R$ there exists a pair of conjugate points of $c$.
\end{compactenum}
Note that the second statement is equivalent to the condition that for every $t_0 \in \R$ there exists $t_1>t_0$ such that $c(t_0)$ and $c(t_1)$ are conjugate points of $c$ \cite[Cor.~1.3]{MR2503983}. In this case Birkhoff's annulus map $B_c$ can be defined on the closed annulus $S^1 \times [0, \pi]$ as follows (cf. \cite{MR1209957,MR1161099} and  \cite[VI. 10]{MR0209095}). For $t \in S^1$ and $\alpha \in (0, \pi)$ consider a geodesic $\gamma$ starting at $c(t)$ in a direction that forms an angle of $\alpha$ with $\dot{c}(t)$. Condition $(i)$ guarantees the existence of some time $t_2$ at which $\gamma$ returns to an encounter with $c$ for the second time, say at $c(t')$. Then $\dot{\gamma}(t_2)$ and $c(t')$ enclose some angle $\alpha'\in (0, \pi)$ and one sets $B_c(t,\alpha)=(t',\alpha')$. In case of $\alpha=0,\pi$ a second conjugate point can be used to extend the map $B_c$ to all of $S^1 \times [0, \pi]$ in a continuous way. The map $B_c$ is isotopic to the identity and preserves a canonical area measure related to the Liouville measure on the unit tangent bundle of $S^2$ which is invariant under the geodesic flow. Moreover, the restrictions of $B_c$ to the two boundary components are inverse to each other. In this case the work of Franks \cite{MR1161099} implies that Birkhoff's annulus map has infinitely many periodic points (cf. \cite[Thm.~4.1]{MR1161099}) and these points correspond to closed geodesics on $\Orb$.

In the following we show that the existence of infinitely many geodesics can still be shown if Birkhoff's annulus map cannot be defined.

\subsection{Simple closed geodesics without conjugate points}

\begin{prp}\label{prp:separating_without_conjugate_points}
Suppose on a Riemannian spindle orbifold $\Orb \cong S^2(p,q)$ there exists a separating geodesic $c$ without conjugate points. Then there exist infinitely many closed geodesics. 
\end{prp}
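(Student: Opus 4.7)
The plan is to apply Theorem~\ref{thm:invisible_img} to $c$. The hypothesis that $c$ has no conjugate points on $\R$ is precisely $\mathrm{ind}(c^m)=0$ for every $m\in\N$, and in the simply connected setting of Section~\ref{sub:Riem_orb} ($\Orb\cong S^2(p,q)$ with $p,q$ coprime) the remark after Theorem~\ref{thm:invisible_img} rules out $c$ being an absolute minimum of $E$ in its free homotopy class. It therefore remains to show that $c$ is not homologically invisible.

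The strategy for this is to argue that $c$ is a local minimum of $E$ on $\Lambda\Orb$. Because $c$ is separating, it is embedded in $\Orb_{\mathrm{reg}}$ and hence admits a tubular neighborhood $T\subset\Orb_{\mathrm{reg}}$ isometric to an annular neighborhood of a simple closed geodesic in a Riemannian surface; any $H^1$-loop $\gamma$ sufficiently close to $c$ in $\Lambda\Orb$ is contained in $T$ and freely homotopic to $c$ inside $T$. Lifting to the universal cover $\widetilde T$ and using the no-conjugate-points assumption together with the classical Jacobi-field minimization theory, one can show that the length of such a $\gamma$ is at least $L(c)$; Cauchy--Schwarz then upgrades this to $E(\gamma)\geq L(\gamma)^2\geq L(c)^2=E(c)$. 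A standard excision argument on an $S^1$-invariant neighborhood of the critical orbit yields
\[
H_*(\Lambda(c)\cup S^1c,\Lambda(c))\neq 0,
\]
so $c$ is not homologically invisible, and Theorem~\ref{thm:invisible_img} produces infinitely many closed geodesics on $\Orb$.

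The main difficulty will be turning the positive semi-definiteness of the index form (granted by $\mathrm{ind}(c)=0$) into a genuine local minimality statement for length in $T$. When $c$ has nullity greater than one, i.e.\ admits a nontrivial periodic Jacobi field orthogonal to $\dot c$, this step is delicate, because positive semi-definiteness of the Hessian is not by itself enough to conclude a local minimum of a smooth functional. The separating hypothesis is what rescues the argument: it reduces the problem to a length-minimization question inside an annulus in an honest surface, where the classical theorem that geodesics without conjugate points minimize length among homotopic nearby curves in a tubular neighborhood can be invoked uniformly.
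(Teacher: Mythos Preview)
Your central claim—that a simple closed geodesic without conjugate points is a local minimum of length (hence of energy) among nearby freely homotopic curves in a tubular neighborhood—is false, and this is exactly the obstacle the paper's proof is designed to overcome. The ``classical theorem'' you invoke is a statement about geodesic \emph{segments} with fixed endpoints; it does not transfer to closed geodesics in a free homotopy class. A clean counterexample: take a surface of revolution with profile $r(z)=1+z^{3}$ near $z=0$. The equator $c$ at $z=0$ is a closed geodesic (since $r'(0)=0$), and along $c$ the Gauss curvature vanishes, so the normal Jacobi equation is $J''=0$; hence $c$ has no conjugate points on $\R$ and admits a nowhere-vanishing periodic Jacobi field. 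Yet the parallel circles at small $z<0$ have length $2\pi(1+z^{3})<2\pi=L(c)$, so $c$ is not a local minimum. In such a situation $S^{1}c$ may well be homologically invisible (the one-sided case below), and then Theorem~\ref{thm:invisible_img} cannot be applied to $c$ at all.

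What the no-conjugate-points hypothesis actually buys is weaker: together with the Gau\ss\ lemma it forces any nearby curve with $E(\gamma)<E(c)$ to be \emph{disjoint} from $c(S^{1})$. This is what yields the trichotomy in the paper's proof—(i) $c$ is a local minimum, (ii) shorter curves exist on both sides, (iii) shorter curves exist on exactly one side—and each case is handled separately. Case~(ii) gives $H_{1}(\Lambda(c)\cup S^{1}c,\Lambda(c))\neq 0$ directly. In cases~(i) and~(iii) one instead passes to the disk $D$ on the ``good'' side, lifts to a manifold chart, and runs a min-max over homotopies contracting $\hat c^{m}$ in $\hat D$ to produce a sequence of geodesics $d_{m}$ with $E(d_{m})\to\infty$ and nontrivial degree-$1$ local homology; Theorem~\ref{thm:invisible_img} is then applied to the $d_{m}$, not to $c$. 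Note also that the proposition does not assume $p,q$ coprime, so your appeal to simple connectedness to rule out $c$ being an absolute minimum is an additional unjustified restriction; the paper's min-max argument sidesteps this issue entirely.
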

\begin{proof} The proof works similarly as the proof of \cite[Thm.~1]{MR1209957}. We choose Riemannian manifold charts $X_0=\Orb_{\mathrm{reg}}$, $X_x$, $X_y$ of $\Orb$ where $\Orb \backslash \{x,y\} \subset \Orb_{\mathrm{reg}}$ and where $X_x$ and $X_y$ cover $\Orb \backslash \{y\}$ and $\Orb \backslash \{x\}$, respectively. Moreover, we choose finite-dimensional approximations (cf. Proposition \ref{prp:finite_approx}) $\Omega=\Omega_X^{\kappa}(k)$ and $P=\Lambda \Orb^{\kappa}(k)$ of $\Omega_X^{\kappa}$ and $\Lambda \Orb^{\kappa}$ containing $S^1c$. Since $c$ is separating, there exists a tubular neighborhood $V$ of $c(S^1)$ in the regular part of $\Orb$ which is homeomorphic to an annulus. For every $\varepsilon > 0$ we let $U(V, \varepsilon)$ denote the set of $\gamma \in P$ which have energy $E(\gamma)<E(c)+\varepsilon$ and whose projections to $\Orb$ lie in $V$ and are freely homotopic to $c$ in $V$. Since we are looking for infinitely many closed geodesics we may assume that the only closed geodesics freely homotopic to $c$ in $V$ are those in $S^1c$. Moreover, we can choose $V$ so small that every $\gamma \in U(V,\varepsilon)$ with $E(\gamma)<E(c)$ is disjoint from $c(S^1)$. This follows from the assumption that $c$ does not have conjugate points and the Gauß Lemma, cf. proof of \cite[Thm.~1]{MR1209957}. If we choose arbitrarily small $V$ and $\varepsilon$ the sets $U(V,\varepsilon)$ form a fundamental system of neighborhoods of $S^1c$ in $P$. Therefore, we have that either (cf. \cite[Thm.~1]{MR1209957})
\begin{enumerate}
\item $c$ is a local minimum of $E$ or
\item $c$ can be approximated by curves $\gamma \in P$ with $E(\gamma)<E(c)$ from both sides or
\item $c$ can be approximated by curves $\gamma \in P$ with $E(\gamma) < E(c)$ precisely from one side.
\end{enumerate}
In the second case it follows as in \cite[Thm.~1]{MR1209957} that $H_1(\Lambda(c) \cup S^1c,\Lambda(c)) \neq 0$ and this implies the existence of infinitely many closed geodesics by Theorem \ref{thm:invisible_img}.

In case $(i)$ or $(iii)$ let $D$ be a disk bounded by $c(S^1)$ such that $c$ cannot be approximated by closed curves in $D$ with $E(\gamma) < E(c)$ and suppose that $x \in D$ has order $p$. The disk $D$ is $p$-foldly covered by a disk $\hat D$ in $X_x$. A parametrization $\hat c$ of the boundary of $\hat D$ by arclength is a geodesic that covers $c$ $p$-times. For some sufficiently large $\kappa_m$ we can choose finite-dimensional approximations $\Omega_m$ of $\Omega^{\kappa_m}_X$ containing a homotopy in $\hat D$ from $\hat c^m$ to a point curve. As above we choose an annulus $V\supset c(S^1)$ so small that $S^1c$ are the only closed geodesics freely homotopic to $c$ in $V$. Moreover, we may assume that every $\gamma \in U(V,\varepsilon)$ which lies in the component $V_0$ of $V \backslash c(S^1)$ contained in $D$ has energy $E(\gamma) \geq E(c)$. Hence we have $E(\gamma)>E(c)$ for all $\gamma \in U(V,\varepsilon) \backslash S^1c$ which are contained in $D$, because otherwise such a $\gamma$ with $E(\gamma) = E(c)$ would be a closed geodesic freely homotopic to $c$. The analogous statement also holds for $c^m$ \cite[p.~5]{MR1209957}. In particular, the analogous statement also holds for $\hat c^m \in \Omega_m$. In this very situation min-max methods applied to homotopies in $\hat D$ from $\hat c^m$ to a point curve are used in the proof of \cite[Thm.~1]{MR1209957} to show the existence of a sequence of closed geodesics $\hat d_m$ in $\hat D$ such that $E(\hat d_m)$ tends to infinity and such that the local groups $H_1(\Omega_X(\hat d_m) \cup S^1c,\Omega_X(\hat d_m))$ do not vanish. The gradient of the energy functional restricted to the finite-dimensional approximation is used to deform the homotopies and so the fact that $\hat D$ is bounded by a geodesic guarantees that the construction remains in $\hat D$. The resulting geodesics project to (orbifold) geodesics $d_m$ in $D$ with $E(d_m)$ tending to infinity and with $H_1(\Lambda(d_m) \cup S^1c,\Lambda(d_m))=H_1(\Omega_X(\hat d_m) \cup S^1c,\Omega_X(\hat d_m)) \neq 0$. Therefore there exist infinitely many closed geodesics on $\Orb$ by Theorem \ref{thm:invisible_img}.
\end{proof}

\subsection{The non-Birkhoff case}
In this section we study the case of a Riemannian spindle orbifold with a separating geodesic for which the corresponding Birkhoff map is not defined. We reduce the existence of infinitely many closed geodesics to Theorem 1. Finally we explain how this implies our main result. The following lemma is a special case of \cite[Lem.~2]{MR1209957}.

\begin{lem}\label{lem:without_conj}
Suppose $c$ is a separating geodesic with conjugate points on a Riemannian spindle orbifold. Then $c$ can be approximated from either side by closed curves $\gamma$ which are disjoint from $c$ and satisfy $E(\gamma)<E(c)$. In particular, $c$ can be approximated from either side by shorter, disjoint curves.
\end{lem}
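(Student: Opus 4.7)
The plan is to adapt the argument of \cite[Lem.~2]{MR1209957} to the orbifold setting. Since $c$ is separating, $c(S^1)$ lies in $\Orb_{\mathrm{reg}}$ and admits an annular tubular neighborhood $V\subset\Orb_{\mathrm{reg}}$ containing no singular points, so the claim reduces to a statement about closed geodesics with conjugate points on a Riemannian surface and no orbifold-specific issue enters the local variational argument.

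First, using \cite[Cor.~1.3]{MR2503983} (as already invoked in Section \ref{sub:outline}), I would choose $t_0<t_1$ less than one period apart such that $c(t_0)$ and $c(t_1)$ are conjugate along $c$. Let $J$ be a non-trivial normal Jacobi field along $c|_{[t_0,t_1]}$ vanishing at the endpoints, and let $W$ be its extension by zero to all of $S^1$. Then $W$ is a piecewise smooth normal vector field along $c$ with index form $I(W,W)=0$ but failing to be smooth at $t_0,t_1$, and a standard Morse-index argument produces a smooth normal perturbation $\tilde W$ of $W$, still supported in $(t_0,t_1)$, with $I(\tilde W,\tilde W)<0$. The normal exponential variation
\[
\gamma_s(t) = \exp_{c(t)}\bigl(s\tilde W(t)\bigr)
\]
is then a smooth family of closed $H^1$-curves in $V$ with $E(\gamma_s)=E(c)+\tfrac{1}{2}s^2 I(\tilde W,\tilde W)+O(s^3)$, hence $E(\gamma_s)<E(c)$ for all sufficiently small $s\neq 0$. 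Changing the sign of $s$ reflects $\gamma_s$ to the opposite side of $c$.

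The main obstacle is that $\gamma_s$ still agrees with $c$ outside $[t_0,t_1]$, so it is not yet disjoint from $c(S^1)$. To achieve disjointness I would add a further smooth normal perturbation $U$ along $c$ of small $H^1$-norm, chosen so that $s\tilde W+sU$ is nowhere vanishing on $S^1$ and points consistently to one prescribed side of $c$. Since $c$ is a geodesic the first-order energy cost of $U$ vanishes, and the second-order cost $I(U,U)$ can be made arbitrarily small in absolute value by scaling $U$ down. Hence it is absorbed into the strictly negative contribution $I(\tilde W,\tilde W)$, and the resulting family still satisfies $E(\gamma_s)<E(c)$ while being disjoint from $c(S^1)$ for small $s\neq 0$. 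The delicate point, as in Bangert's original argument, is arranging $\tilde W$ and $U$ so that their supports and signs cooperate to guarantee simultaneously nonvanishing and a negative total index; this is handled by a careful bump-function construction. Selecting the opposite sign of $s$ (together with $-U$) yields the analogous family on the other side of $c$, giving approximations from either side, and the final sentence of the lemma then follows because $E(\gamma)<E(c)$ forces $L(\gamma)<L(c)$ after reparametrization by arclength.
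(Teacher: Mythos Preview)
The paper does not prove this lemma; it simply records it as a special case of \cite[Lem.~2]{MR1209957}. Your reduction to the manifold situation is exactly right: since $c$ is separating it lies in $\Orb_{\mathrm{reg}}$ with an annular tubular neighborhood, so nothing orbifold-specific remains and Bangert's lemma applies verbatim. That observation is really all the paper needs.

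Your sketch of the variational argument, however, has a genuine gap. You assert that one can choose conjugate times $t_0<t_1$ \emph{less than one period apart}, citing \cite[Cor.~1.3]{MR2503983}. Lytchak's result only says that for every $t_0$ some conjugate $t_1>t_0$ exists; it gives no bound on $t_1-t_0$. And such a bound is false in general: for a separating geodesic along which the Gaussian curvature is a small positive constant $\varepsilon$, the Jacobi equation is $u''+\varepsilon u=0$ and consecutive conjugate points are $\pi/\sqrt{\varepsilon}$ apart, which can be made arbitrarily large compared to the period. In that regime your broken Jacobi field $W$ cannot be extended by zero to a periodic section along $c$, and the whole construction of $\tilde W$ with $I(\tilde W,\tilde W)<0$ supported in one period collapses.

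The fix, which also removes the delicate bump-function step, uses the two-dimensionality. The normal Jacobi operator along $c$ is a scalar periodic Sturm--Liouville operator $L=-d^2/dt^2-K(t)$, and a classical fact about Hill's equation is that $u''+Ku=0$ has conjugate points on $\R$ if and only if the lowest \emph{periodic} eigenvalue $\lambda_0$ of $L$ is negative. (One direction: if a positive Floquet solution $\psi$ exists, writing $v=\psi w$ gives $I(v,v)=\int(v'-\tfrac{\psi'}{\psi}v)^2\ge 0$ for all periodic $v$; the converse follows from Sturm comparison with the equation $u''+(K+\lambda_0)u=0$, whose positive periodic eigenfunction forces disconjugacy.) Hence the hypothesis of the lemma gives $\lambda_0<0$, and the corresponding ground-state eigenfunction $\phi_0$ is strictly positive. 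Setting $\gamma_s(t)=\exp_{c(t)}\bigl(s\,\phi_0(t)N(t)\bigr)$ yields, for small $s\neq 0$, closed curves disjoint from $c(S^1)$, lying entirely on one side (and on the other for $-s$), with $E(\gamma_s)=E(c)+\tfrac12 s^2\lambda_0\|\phi_0\|_{L^2}^2+O(s^3)<E(c)$. Your concluding remark about length then goes through unchanged.
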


Now we can prove

\begin{prp} \label{prp:separating_with_conjugate_points}
Suppose $c$ is a separating geodesic with conjugate points on a simply connected Riemannian spindle orbifold $\Orb\cong S^2(p,q)$ and $d: (0,\infty) \To \Orb$ is a geodesic disjoint from $c$. Then there exist infinitely many closed geodesics.
\end{prp}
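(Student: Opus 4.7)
The plan is to reduce to Proposition \ref{prp:separating_without_conjugate_points} by producing, from $c$ and $d$, a new separating closed geodesic $c'$ on $\Orb$ without conjugate points, to which that proposition applies. The underlying strategy mirrors \cite[Thm.~2]{MR1209957}: use $d$ as a barrier on one side of $c$, together with the shortening provided by Lemma \ref{lem:without_conj}, to construct a shorter separating geodesic $c'$ lying strictly between $c$ and $d$.

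Without loss of generality assume $d((0,\infty)) \subset D_x$, the component of $\Orb \setminus c(S^1)$ containing the singular point $x$ of order $p$. Let $\mathcal{F}$ denote the family of embedded closed curves in $D_x$ that are disjoint from both $c$ and $d((0,\infty))$ and that separate them, and set $L_0 = \inf\{\mathrm{length}(\gamma) : \gamma \in \mathcal{F}\}$. By Lemma \ref{lem:without_conj} applied to the $D_x$-side of $c$ there exist curves in $\mathcal{F}$ arbitrarily close to $c$ with length strictly less than $\mathrm{length}(c)$, so $L_0 < \mathrm{length}(c)$. A direct minimization, or equivalently the curve-shortening flow applied to a shorter approximant in $\mathcal{F}$ (using Lemma \ref{prp:flow_orbifold_gen} and the maximum principle to keep the flow disjoint from both $c$ and $d((0,\infty))$), then produces a minimizer $c'$, an embedded closed geodesic on $\Orb$ of length $L_0$, strictly shorter than $c$, separating $c$ from $d((0,\infty))$.

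For $c'$ to qualify as a separating geodesic in the sense of Section \ref{sec:curve_shortening}, it must further enclose the singular point $x$ on its $d$-side. This is automatic if $d$ passes through $x$; otherwise it follows from the topology of the $\omega$-limit set of $d$ inside the annulus $D_x \setminus \{x\}$, possibly reducing to the $p$-fold manifold cover $X_x$ of $\Orb \setminus \{y\}$ (as in the proof of Proposition \ref{prp:separating_without_conjugate_points}) in the corner case where the closure of $d$ is null-homotopic in that annulus. Once $c'$ is separating on $\Orb$, it cannot have conjugate points: if it did, Lemma \ref{lem:without_conj} applied to $c'$ would furnish curves strictly shorter than $c'$ on the $c$-side of $c'$ which remain in $\mathcal{F}$, contradicting the minimality of $c'$. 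Proposition \ref{prp:separating_without_conjugate_points} then yields infinitely many closed geodesics on $\Orb$.

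The principal technical obstacle is ensuring the existence and embeddedness of the minimizer $c'$ in the regular part of $\Orb$: a priori a minimizing sequence could collapse into the singular point $x$, or its limit could merge with a component of the closure of $d((0,\infty))$. Excluding these scenarios is where the tangent-flow classification underlying Lemma \ref{prp:flow_orbifold_gen} and the strong maximum principle for the curve-shortening evolution play a decisive role.
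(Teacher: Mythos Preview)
Your overall strategy---produce a new separating geodesic without conjugate points between $c$ and the barrier provided by $d$, and then invoke Proposition~\ref{prp:separating_without_conjugate_points}---is the right one and is also the paper's plan. The execution, however, has a genuine gap.

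The central error is the assertion that ``a direct minimization, or equivalently the curve-shortening flow applied to a shorter approximant in $\mathcal F$, produces a minimizer $c'$ of length $L_0$''. These two procedures are not equivalent, and neither delivers what you need. Curve-shortening flow applied to a \emph{single} initial curve $\gamma\in\mathcal F$ subconverges (when it does not collapse) to \emph{some} closed geodesic, but there is no reason this geodesic realizes the infimum $L_0$; it is merely a critical point of length, not the global minimum over $\mathcal F$. Consequently your key step---``if $c'$ had conjugate points, Lemma~\ref{lem:without_conj} would give strictly shorter curves in $\mathcal F$, contradicting minimality''---collapses, because $c'$ is not known to be minimal. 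On the other hand, if you truly mean a direct minimization over $\mathcal F$, you must establish that a minimizing sequence converges to an embedded closed geodesic lying strictly in the interior of the region (disjoint from $c$, from the closure of $d((0,\infty))$, and from the singular point). You acknowledge this as ``the principal technical obstacle'' but do not resolve it; in particular nothing prevents a minimizing sequence from degenerating onto the closure of $d((0,\infty))$, which can be quite wild.

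The paper avoids this difficulty by \emph{iterating} rather than minimizing in one shot. One first replaces $d$ by a limit geodesic $\tilde d$ whose closure is still disjoint from $c(S^1)$, obtaining a clean barrier. Starting from a shorter approximant $\gamma_1$ to $c$ in the region $U_1$ between $c$ and the closure of $\tilde d(\R)$, the curve-shortening flow (trapped in $U_1$ by the avoidance principle against $c$ and the strong maximum principle against geodesics in the closure of $\tilde d(\R)$) subconverges to a separating geodesic $\tilde d_1$ strictly shorter than $c$. If $\tilde d_1$ has no conjugate points, Proposition~\ref{prp:separating_without_conjugate_points} finishes the proof. If it does, one repeats the construction in the region between $c$ and $\tilde d_1$, obtaining $\tilde d_2$ with $\mathrm{length}(\tilde d_2)<\mathrm{length}(\tilde d_1)$, and so on. Either the process terminates at a separating geodesic without conjugate points, or it produces infinitely many pairwise distinct simple closed geodesics directly. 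This iteration is precisely what circumvents the need to prove that a global length minimizer in $\mathcal F$ exists and lies in the interior.
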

\begin{proof} The proof is similar as the proof of \cite[Thm.~2]{MR1209957}. However, we use the curve-shortening flow instead of Birkhoff's curve shortening process and simplify the second part of the argument.

Let $D$ be the component of $\Orb \backslash c(S^1)$ that contains $d(\R)$. Since $c$ is separating by assumption, there exists an open neighborhood $V$ of the closure of $D$ in $\Orb$ that admits a Riemannian manifold chart $\hat V$. The geodesic $d$ lifts to a geodesic $\hat d$ on $\hat V$ and the geodesic $c$ is covered $p$-times by a geodesic $\hat c$ disjoint from $\hat d$. In \cite[Thm.~2]{MR1209957} it is proven that the closure of every limit geodesic $\bar d$ of $\hat d$, that is every geodesic of the form $\bar d : \R \To \hat V$, $\bar d (t)= \exp_p(tv)$ where $(p,v)$ is an accumulation point of $(\hat d, \hat d')$ in $T \hat V$, is disjoint from $\hat c(S^1)$. In particular, the closure of the image $\tilde d$ in $V$ of such a limit geodesic of $\hat d$ is disjoint from  $c(S^1)$. Let $U_1$ be the component of $D\backslash \mathrm{closure}(\tilde d(\R))$ that contains $c(S^1)$ in its closure. Since the closure of $\tilde d(\R)$ is disjoint from $c(S^1)$, by Lemma \ref{lem:without_conj} there is an embedded loop $\gamma_1$ in $U_1$ that is freely homotopic in the regular part of $U_1$ to $c$ and shorter than $c$. We claim that the evolution $\Gamma_t$ of $\gamma_1$ under the curve-shortening flow does not leave $U_1$. Otherwise there would exist some $t_1$ minimal with the property that $\Gamma_{t_1}(S^1)$ is not contained in $U_1$. Let $x \in \Gamma_{t_1}(S^1) \cap U_1^C$. By the avoidance principle applied to $c$ and $\gamma_1$ the only possibility could be that $x$ is contained in the closure of $\tilde d(\R)$. Since $\gamma_1$ is non-contractible in $U_1$ and since $\tilde d(\R)$ is not a point, the point $x$ is regular by Lemma \ref{prp:flow_orbifold} and so is the loop $\Gamma_{t_1}$ by Lemma \ref{prp:flow_orbifold_gen}. Let $d_0:\R \To \Orb$ be the geodesic which is tangent to $\Gamma_{t_1}$ at $x$. By minimality of $t_1$ the geodesic $d_0$ is contained in the closure of $\tilde d(\R)$. In particular, the flow of $\gamma_1$ and the (static) flow of $d_0$ touch at $(x,t_1)$ for the first time. This is impossible by the maximum principle (cf. Section \ref{sec:curve_shortening}) and hence the evolution of $\gamma_1$ stays in $U_1$ as claimed. Moreover, since $\gamma_1$ is non-contractible in $U_1$ by assumption, it evolves in the regular part forever, i.e. we are in case $(iii)$ of Lemma \ref{prp:flow_orbifold}. By Lemma \ref{lem:sub_to_min_geo} the flow subconverges to a simple closed geodesic $\tilde d_1$ contained in the closure of $U_1$. The proof of Proposition \ref{prp:simple_geodesic} shows that this limit geodesic actually lies in the regular part of $\Orb$. It is distinct from $c$ since $\gamma_1$ is shorter than $c$ and the curve-shortening flow does not increase the arclength. By the choice of $\gamma_1$ the geodesic $\tilde d_1$ is separating and so we are done in this case by Proposition \ref{prp:separating_without_conjugate_points} if $\tilde d_1$ does not have conjugate points. Otherwise we define $U_2$ to be the component of $D \backslash \tilde d_1(S^1)$ whose closure contains $c$. This component is bounded by two geodesics with conjugate points and contained in the regular part of $\Orb$ by the construction of $\tilde d_1$. Moreover, it contains a non-contractible embedded loop $\gamma_2$ which is shorter than $\tilde d_1$ by Lemma \ref{lem:without_conj} and hence also shorter than $c$. Again, letting $\gamma_2$ evolve under the curve-shortening flow, the same argument as above yields a separating limit geodesic $\tilde d_2$ in $\overline{U}_2$ which is now distinct from both $c$ and $\tilde d_1$ and hence contained in $U_2$. This process can be iterated. It either yields infinitely many (simple) closed geodesics on $\Orb$ with conjugate points or terminates at a separating geodesic without conjugate points which in turn implies the existence of infinitely many closed geodesics by Proposition \ref{prp:separating_without_conjugate_points}. 
\end{proof}

From what has been said in Section \ref{sub:outline} we see that Propositions \ref{prp:separating_with_conjugate_points} and \ref{prp:separating_without_conjugate_points} imply the following statement.

\begin{prp}
Let $c$ be a separating geodesic on a Riemannian spindle orbifold $\Orb \cong S^2(p,q)$ for which Birkhoff's annulus map $B_c$ is not defined. Then there exist infinitely many closed geodesics.
\end{prp}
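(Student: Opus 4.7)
The plan is to split into cases according to which of the two conditions $(i),(ii)$ from Section \ref{sub:outline} (needed to define $B_c$) fails, and to feed each case into one of the two previous propositions.

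First, I would observe that the hypothesis that $B_c$ is not defined simply means that at least one of the following holds: either condition $(i)$ fails, i.e.\ there exists a geodesic $d:[0,\infty) \To \Orb$ with $d(0) \in c(S^1)$ and $d(t) \notin c(S^1)$ for all $t>0$; or condition $(ii)$ fails, i.e.\ when $c$ is extended to $\R$ it has no pair of conjugate points.

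\textbf{Case A: $c$ has no conjugate points.} Then condition $(ii)$ fails. In this case Proposition \ref{prp:separating_without_conjugate_points} applies directly to the separating geodesic $c$ and yields infinitely many closed geodesics on $\Orb$.

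\textbf{Case B: $c$ has conjugate points.} Then condition $(ii)$ holds, so by assumption condition $(i)$ must fail. Pick a geodesic $d:[0,\infty)\To\Orb$ witnessing the failure of $(i)$: $d(0) \in c(S^1)$ and $d(t)\notin c(S^1)$ for every $t>0$. Restricting to the open ray, $d|_{(0,\infty)}:(0,\infty)\To\Orb$ is a geodesic whose image is disjoint from $c(S^1)$. Since $c$ has conjugate points and $\Orb \cong S^2(p,q)$ is a simply connected Riemannian spindle orbifold, Proposition \ref{prp:separating_with_conjugate_points} applies to the pair $(c, d|_{(0,\infty)})$ and again produces infinitely many closed geodesics.

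The only step that needs a moment of care is the case split itself, in particular the observation that in Case B the restriction of the geodesic witnessing the failure of $(i)$ to $(0,\infty)$ is genuinely disjoint from $c(S^1)$ (so that it fits the hypothesis of Proposition \ref{prp:separating_with_conjugate_points}); this is immediate from the defining property of the failure of $(i)$. No new geometric input is needed: the two preceding propositions cover the two logical possibilities and together dispose of every way in which $B_c$ can fail to be defined.
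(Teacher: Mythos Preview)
Your proposal is correct and follows exactly the approach the paper takes: the paper simply states that Propositions \ref{prp:separating_without_conjugate_points} and \ref{prp:separating_with_conjugate_points} together imply the result, and your case split on whether condition $(ii)$ or condition $(i)$ fails is precisely the reasoning behind that sentence. The only remark is that the simply-connectedness hypothesis (needed to invoke Proposition \ref{prp:separating_with_conjugate_points} in Case B) is not stated in the proposition itself but is inherited from the standing assumption of Section \ref{sub:outline}; you use it correctly.
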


Recall from Section \ref{sub:outline} that Frank's work implies the existence of infinitely many closed geodesics in the case in which Birkhoff's annulus map $B_c$ can be defined (cf. Section \ref{sub:outline}). Therefore, in any case there are infinitely many closed geodesics on a Riemannian spindle orbifold. By the remark at the end of Section \ref{sub:Riem_orb} our main result follows.

\section{Closed geodesics in the regular part}\label{sec_geo_in_reg_part}

In this section we sketch some ideas and make some speculations on the question, posed in the introduction, of when there exists one, or even infinitely many closed geodesics in the regular part of a Riemannian $2$-orbifold $\Orb$ with isolated singularities. 

Let us begin with a general remark. In Section \ref{sec:curve_shortening} we have shown that an embedded loop in the regular part of a simply connected spindle orbifold cannot flow into a singular point in finite time under the curve-shortening flow unless it collapses into this point entirely. The simply-connectedness assumption was used to handle the case of singular points of order $2$ (see proof of Lemma \ref{prp:flow_orbifold_gen}). Using a local non-collasping result of Brian White this property can actually be shown to be true for embedded loops in the regular part of any Riemannian $2$-orbifold. More precisely, if such a loop were to flow into a singular point in finite time, then we could locally lift the flow to a manifold chart and look at a tangent flow of a blow-up limit at the singular space-time point in question as in the proof of Lemma \ref{prp:flow_orbifold}. Recall from that argument that such a tangent flow can only either be a self-shrinking circle or a static line, and that we used equivariance with respect to the deck transformation group in case of a singular point of order at least $3$ in order to exclude the latter case. In case of a singular point of order $2$ the blow-up sequence could in principle converge to a line, but this line would have to have multiplicity $2$, i.e. two strands of the lifted flow that are permuted by the deck transformation group would converge to it in the blow-up limit. However, this possibility is ruled out by the non-collapsing result of Brian White \cite[Thm.9.1]{MR1758759} (cf. also \cite[Sec.~7]{Chodosh} for more details).

The discussed argument not only works for embedded loops, but also for loops that stay $\delta$-embedded in the regular part under the curve-shortening flow for some $\delta>0$ as long as it remains in the regular part. Here a loop is called \emph{$\delta$-embedded} if its restriction to each subinterval of length $\delta$ with respect to arclength parametrization is embedded. If the $2$-orbifold $\Orb$ has at least $4$ singular points, then one can find infinitely many loops in the regular part with this property that are pairwise homotopically different in the regular part. This is because the property of having a ``minimal number of tranverse self-intersections'' in ones homotopy class is preserved under the curve-shortening flow. Each such loop subconverges to a closed (orbifold) geodesic on $\Orb$ and one is left to decide whether the limits lie in the regular part. If all singular points have orders at least $3$, this is the case by the same argument as in the proof of Lemma \ref{prp:simple_geodesic}, and so there exist infinitely many distinct closed geodesics in the regular part in this case. We believe that the same conclusion can be drawn in the presence of singular points of order $2$ by arguments similar as the one in the proof of Lemma \ref{prp:simple_geodesic}. For instance, if a limit geodesic hit a singular point of order $2$, then, by the above arguments, it would have to oscillate between two singular points of order $2$ and avoid all other singularities. By choosing the initial loops in a clever way, this limit behaviour could possibly be ruled out in advance.

In the case that $\Orb$ has $3$ singular points one should still be able to infer the existence of infinitely many closed geodesics if the orders of the singular points are sufficiently large, by observing that the \emph{avoidance of singularities principle} discussed above still holds for loops that ``wind around a singular point of order $2n$ up to $n-1$ times''. Otherwise, one might be able to use the existence of a finite manifold cover in this case to find, perhaps at least one, closed geodesic in the regular part.

In the case of (simply connected) spindle orbifolds the above arguments break down and we do not know how to find infinitely many closed geodesics in the regular part. There might as well exist metrics without this property.

\section{Appendix}
\label{appendix}

\subsection{Orbifold loop spaces}\label{app:loop_space} We summarize the description of an orbifold loop space from \cite{MR2218759}. For a Riemannian orbifold $\Orb$ let $X$ be the disjoint union of Riemannian manifold charts covering $\Orb$ and let $\mathcal{G}$ be the small category with set of objects $X$ and arrows being the germs of change of charts of $X$. Then $(\mathcal{G},X)$ with the usual topology of germs on $\mathcal{G}$ is an étale groupoid and $\Orb$ can be represented as a quotient $\Orb=\mathcal{G}\backslash X$ \cite[Sect.~2.1.4]{MR2218759}. A \emph{$\mathcal{G}$-loop based at $x \in X$} over a subdivision $0=t_0<t_1 \dots < t_k=1$ of the interval $[0,1]$ is a sequence $c=(g_0,c_1,g_1,\ldots,c_k,g_k)$ where
\begin{enumerate}
\item $c_i: [t_{i-1},t_i] \To X$ is of class $H^1$, i.e. $c_i$ is absolutely continuous and the velocity functions $t\mapsto |\dot{c}_i(t)|$ are square integrable.
\item $g_i$ is an element of $\mathcal{G}$ such that $\alpha(g_i)=c_{i+1}(t_i)$ for $i=0,1,\ldots,k-1$, $\omega(g_i)=c_i(t_i)$ for $i=1,\ldots,k$ and $\omega(g_0)=\alpha(g_k)=x$. Here $\alpha(g)$ and $\omega(g)$ denote the source and the target of $g\in \mathcal{G}$ (cf. \cite[Sect.~2.1.4]{MR2218759}). 
\end{enumerate}
A $\mathcal{G}$-loop is called \emph{geodesic} if all the $c_i$ are geodesics and their velocities match up at the break points $t_i$ via the $g_i$. A geodesic $\mathcal{G}$-loop gives rise to a closed geodesic on $\Orb$ in the sense of Definition \ref{dfn:orb_geo}. The space $\Omega_x$ is defined as the set of equivalence classes of $\mathcal{G}$-loops based at $x$ under the equivalence relation generated by the following operations \cite[Sect.~2.3.2, 3.3.2]{MR2218759}:
\begin{enumerate}
\item Given a $\mathcal{G}$-loop $c=(g_0,c_1,g_1,\ldots,c_k,g_k)$ over the subdivision $0=t_0<\dots<t_k=1$, we can add a subdivision point $t' \in (t_{i-1},t_i)$ together with the unit element $g'=1_{c_i(t')}$ to get a new sequence, replacing $c_i$ in $c$ by $c_i'$, $g'$,$c_i''$, where $c_i'$ and $c_i''$ are the restrictions of $c_i$ to the intervals $[t_{i-1},t']$ and $[t',t_i]$ and where $1_y$, $y\in X$ is the germ of the identity map at $y$.
\item Replace a $\mathcal{G}$-loop $c$ by a new loop $c'= (g'_0,c'_1,g'_1,\ldots,c'_k,g'_k)$ over the same subdivision as follows: for each $i=1,\ldots,k$ choose $H^1$-maps $h_i:[t_{i-1},t_i] \To \mathcal{G}$ such that $\alpha(h_i(t))=c_i(t)$, and define $c_i': t \mapsto \omega (h_i(t))$, $g_i'= h_i(t_i)g_ih_{i+1}(t_i)^{-1}$ for $i=1,\ldots,k-1$, $g_0'=g_0h_1(0)^{-1}$ and $g_k'=h_k(1)g_k$.
\end{enumerate}

The space $\Omega_X$ is defined to be $\Omega_X = \bigcup_{x\in X} \Omega_x$. It admits a natural structure of a Riemannian Hilbert manifold \cite[Sect.~3.3.2]{MR2218759}. A $\mathcal{G}$-loop $c$ gives rise to a development $(M,\hat \gamma)$ in the sense of Definition \ref{dfn:develop}. A neighborhood of the equivalence class of $c$ in $\Omega_X$ is isometric to a neighborhood of $\hat \gamma$ in $\Lambda M$. If $\Orb$ is compact, then $\Omega_X$ is complete with respect to the induced metric (cf. proof of \cite[Sect.~1.4]{Klingenberg}). An energy function can be defined on $\Omega_X$ whose critical points correspond to geodesic $\mathcal{G}$-loops \cite[Sect.~3.4.1]{MR2218759}. The groupoid $\mathcal{G}$ acts isometrically on the left on $\Omega_X$. If $[c]\in \Omega_X$ is represented by the $\mathcal{G}$-loop $c=(g_0,c_1,g_1,\ldots,c_k,g_k)$ based at $x$ and $g$ is an element of $\mathcal{G}$ with $\alpha(g)=x$ and $\omega(g)=y$, then $g[c]$ is represented by $gc:= (gg_0,c_1,g_1,\ldots,c_k,g_kg^{-1})$ \cite[Sect.~2.3.3]{MR2218759}. The quotient $|\Lambda \Orb|=\mathcal{G} \backslash \Omega_X$ is called the \emph{free loop space} of $\Orb$. The quotient map $\Omega_X \To \mathcal{G} \backslash \Omega_X$ induces a natural structure of a Riemannian Hilbert orbifold on $|\Lambda \Orb|$ denoted as $\Lambda \Orb$ \cite[Sect.~3.3.4]{MR2218759}. Since $\Omega_X$ is complete as a metric space, so is $\Lambda \Orb$. The space $\Lambda (\Orb_{\mathrm{reg}})$ is naturally a subset of $\Lambda \Orb$ and coincides with the ordinary loop space of $\Orb_{\mathrm{reg}}$ as a Riemannian manifold (cf. \cite{Klingenberg}). If $\Orb$ has only isolated singularities, then the only elements of $\Omega_X$ with nontrivial $\mathcal{G}$-isotropy are the loops that project to a singular point of $\Orb$. In this case $\mathcal{G}$-equivalence classes of $\mathcal{G}$-loops correspond to equivalence classes of developments discussed in Section \ref{sub:orb_loop} and the regular part of $\Lambda \Orb$ is isometric to the Riemannian Hilbert manifold $\Lambda_{\mathrm{reg}} \Orb$ constructed in Section \ref{sub:orb_loop}. In particular, $\Lambda \Orb$ is the metric completion of $\Lambda_{\mathrm{reg}} \Orb$, as $\Lambda_{\mathrm{reg}} \Orb$ is dense in the complete metric space $\Lambda \Orb$.

As in the manifold case the spaces $\Omega_X^{\kappa}=\Omega_X \cap E^{-1}([0,\kappa))$ and $\Lambda \Orb^{\kappa}= \Lambda \Orb^{\kappa}\cap E^{-1}([0,\kappa))$ admit finite-dimensional approximations \cite[v.1]{MR2218759}.

\begin{prp}[Finite-dimensional approximation]\label{prp:finite_approx} Let $\Orb$ be a compact orbifold and let $\kappa \geq 0$ be given. 
\begin{enumerate}
\item There exist $\epsilon >0$ and a big enough $k$ such that every element of $\Omega_X^{\kappa}$ can be represented by a $\mathcal{G}$-path $c=(g_0,c_1,g_1,\ldots,c_k,g_k)$ defined over the subdivision $0=t_0<t_1<\cdots < t_k=1$, where $t_i=i/k$, and each $c_i(t_{i-1})$ is the center of a convex open geodesic ball of radius $\epsilon$ containing the image of $c_i$.
\item The space $\Omega_X^{\kappa}$ retracts by energy-nonincreasing deformation onto the subspace $\Omega_X^{\kappa}(k)$ whose elements are represented by $\mathcal{G}$-loops as above for which each $c_i$ is a geodesic segment.
\item The restrictions of the energy function to $\Omega_X^{\kappa}$ and to $\Omega_X^{\kappa}(k)$ have the same critical points, and at each such point the nullity and the index (which can still be defined in this case \cite{MR2218759}) are the same.
\item The orbifold $\Lambda \Orb^{\kappa}:=\mathcal{G}\backslash \Omega^{\kappa}_X$ retracts by energy-nonincreasing deformation onto the finite-dimensional orbifold $\Lambda \Orb^{\kappa}(k)=\mathcal{G}\backslash \Omega_X^{\kappa}(k)$.
\end{enumerate}
\end{prp}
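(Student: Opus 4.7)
My plan is to model the proof on the classical manifold construction in Klingenberg's book, working $\mathcal{G}$-locally in the manifold charts $X$ and then passing to the quotient. The key input from the orbifold setting is simply that $X$ is a disjoint union of Riemannian manifold charts covering the compact $\Orb$, so one still has a uniform lower bound on the convexity radius.

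First I would prove (i). Since $\Orb$ is compact, the union of images of the manifold charts can be chosen to have a uniform Whitehead convexity radius, so there exists $\epsilon_0>0$ such that every open geodesic ball of radius $\leq\epsilon_0$ in any chart is strongly convex. For a representative $c=(g_0,c_1,g_1,\ldots,c_k,g_k)$ of an element of $\Omega_X^{\kappa}$, Cauchy--Schwarz applied piece-by-piece gives
\[
\mathrm{length}(c_i)\;\leq\;\sqrt{(t_i-t_{i-1})\cdot 2E(c_i)}\;\leq\;\sqrt{2\kappa/k}.
\]
Choosing $k$ so large that $\sqrt{2\kappa/k}<\epsilon<\epsilon_0$ and subdividing any given $\mathcal{G}$-loop at the points $t_i=i/k$ via operation (1), I obtain a representative all of whose segments $c_i$ are contained in the convex $\epsilon$-ball around $c_i(t_{i-1})$. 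Note this uses the fact that $c_i([t_{i-1},t_i])$ is automatically contained in a single chart once we insert enough subdivision points (one more application of operation (1)), so the convex ball makes sense. This proves (i).

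For (ii) I would construct the retraction by explicit piecewise geodesic replacement. Given a representative as in (i), define $r(c)$ on each segment $[t_{i-1},t_i]$ by replacing $c_i$ with the unique minimizing geodesic $\tilde c_i$ in the convex $\epsilon$-ball from $c_i(t_{i-1})$ to $c_i(t_i)$, keeping the endpoints and the $g_i$'s fixed. A standard one-parameter deformation $c^s$, $s\in[0,1]$, can be built by using the exponential map at $c_i(t_{i-1})$ and linearly interpolating between $\exp^{-1}\circ c_i$ and its chord in the tangent space; all intermediate curves remain inside the convex ball and have energy no larger than $E(c_i)$ by the standard convexity estimate (energy of a chord in normal coordinates bounds the energy of any curve with the same endpoints from below, up to the usual Taylor comparison), and one checks continuity in the $H^1$-topology on $\Omega_X$. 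The deformation is independent of the chosen $\mathcal{G}$-representative because it only modifies $c_i$ while leaving the $g_i$ untouched; combined with the obvious equivariance under operation (2) this means the homotopy descends to $\Lambda\Orb^{\kappa}=\mathcal{G}\backslash\Omega_X^{\kappa}$, giving (iv) at the same time. The target $\Omega_X^{\kappa}(k)$ is finite-dimensional because the data of $r(c)$ consists of the finitely many points $c_i(t_{i-1})\in X$ together with the $g_i\in\mathcal{G}$.

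For (iii), a critical point of $E$ on $\Omega_X^{\kappa}$ is a geodesic $\mathcal{G}$-loop, and in particular each segment is already a geodesic segment, so it lies in $\Omega_X^{\kappa}(k)$; conversely any critical point of $E|_{\Omega_X^{\kappa}(k)}$ must have matching velocities at the break points (otherwise the first-variation formula in the finite-dimensional model, obtained by varying each break point $c_i(t_{i-1})\in X$ and each $g_i\in\mathcal{G}$ locally, produces a descent direction using the break-angle). Hence the critical sets coincide, and since $\Omega_X^{\kappa}(k)$ is a submanifold on which the retraction is the identity and whose complement in $\Omega_X^{\kappa}$ is contracted strictly in energy, the index and nullity computed via second variation restricted to smooth variations agree with those computed on the ambient Hilbert manifold; this is the same calculation as in Klingenberg's book.

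The main obstacle I anticipate is bookkeeping rather than geometry: one must verify that the deformation in (ii) is well-defined on $\mathcal{G}$-equivalence classes (so that it indeed factors through $\Lambda\Orb^{\kappa}$ in (iv)) and that it extends continuously across the singular stratum where the loop passes through a point with nontrivial isotropy. Both points are handled by working with representatives satisfying (i): fixing the base point $c_i(t_{i-1})$ and the transition germs $g_i$ rigidifies the $\mathcal{G}$-action along the homotopy, and the retraction becomes manifestly $\mathcal{G}$-equivariant, so it descends to the claimed retraction of $\Lambda\Orb^{\kappa}$ onto $\Lambda\Orb^{\kappa}(k)$.
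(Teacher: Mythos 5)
The paper itself gives no proof of this proposition: it is imported from \cite[v.1]{MR2218759} with the remark that it works ``as in the manifold case'', and your plan --- running the Milnor--Klingenberg broken-geodesic approximation chart-by-chart in the groupoid picture and descending to $\mathcal{G}\backslash\Omega_X^{\kappa}$ --- is exactly the intended route. However, two steps of your sketch do not yet work as written. The more serious one is in (ii): the homotopy you propose, linear interpolation in normal coordinates between $\exp^{-1}\circ c_i$ and its chord, is \emph{not} energy-nonincreasing in general. Convexity of the energy under such interpolation is exact only for the flat metric; in a geodesic $\epsilon$-ball the metric coefficients deviate from the Euclidean ones at order $\epsilon^2$, so your ``Taylor comparison'' only yields $E\le E(c_i)+O(\epsilon^2)$, which loses precisely the property the proposition asserts and which the paper later needs (to push homotopies and min-max constructions into $\Lambda\Orb^{\kappa}(k)$ without raising energy levels). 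The standard repair is the progressive replacement deformation of Milnor/Klingenberg \cite{Klingenberg}: for $s\in[t_{i-1},t_i]$ replace $c_i$ on $[t_{i-1},s]$ by the unique minimizing geodesic from $c_i(t_{i-1})$ to $c_i(s)$ and leave the rest of $c_i$ untouched; since at each stage a subarc is replaced by the energy-minimizer with the same endpoints over the same parameter interval, energy is monotone, and $H^1$-continuity is the classical computation. Your equivariance argument (only the $c_i$ move, break points and the $g_i$ are fixed, and change-of-chart germs are isometries) then applies verbatim and gives (iv).

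The second gap is in (i). Operation (1) only \emph{adds} subdivision points, while an arbitrary element of $\Omega_X^{\kappa}$ is represented over an arbitrary subdivision; to reach the uniform subdivision $t_i=i/k$ you must also \emph{delete} old break points, i.e.\ merge two adjacent pieces across a germ $g$, and this requires operation (2) with the germ extended to a change of charts over a neighborhood of the (short) adjacent piece --- this is where the \'etale structure and the smallness of the segments genuinely enter, and it is not addressed by the remark that each $c_i$ lies in a single chart (that is automatic, since $X$ is a disjoint union). Relatedly, a uniform convexity radius ``on $X$'' is not available, because the convexity radius inside a chart degenerates near its boundary; the correct compactness argument is on $\Orb$: choose finitely many charts and $\epsilon>0$ so that every point of $\Orb$ admits a lift in some chart around which the $\epsilon$-ball is convex with compact closure in that chart, and then move each short segment into such a chart via $\mathcal{G}$ (again operation (2)). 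With these two repairs your outline becomes the proof the paper has in mind; part (iii) is fine as the usual first/second variation comparison on broken geodesics.
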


For a Riemannian spindle orbifold $\Orb = S^2(p,q)$ with a separating geodesic $c$ we can choose the following convenient charts. Let $x,y \in \Orb$ be two points in different components of the complement of $c$ such that $\Orb \backslash \{x,y\}$ lies in the regular part of $\Orb$. We choose $X$ to be the disjoint union of $X_0 = \Orb_{\mathrm{reg}}$ and Riemannian manifold charts $X_x$ and $X_y$ covering $\Orb\backslash \{y\}$ $p$-foldly and $\Orb\backslash \{x\}$ $q$-foldly, respectively. With respect to this choice any $\mathcal{G}$-loop based at $z \in X_0$ that projects to $\Orb_{\mathrm{reg}}$ can be represented as $(1_z,c,1_z)$ for an $H^1$-loop in $X_0$ with $c(0)=z=c(1)$ and can thus be identified with $c$. Any $\mathcal{G}$-equivalence class of $\mathcal{G}$-loops based at $z \in X_x$ which is represented by a $\mathcal{G}$-loop that projects to $\Orb\backslash \{y\}$ can be represented as $(1_z,c,g_z)$ for an $H^1$-loop in $X_0$ with $c(0)=z$ and $c(1)=gz$ and a deck transformation $g \in G_x$ of the covering $X_x \To \Orb \backslash \{y\}$.

\subsection{Homology generated by iterated geodesics}

In this section we explain why the proofs of \cite[Thm.~3]{MR0715246} and \cite[Lem.~2]{MR0715246} also work in the slightly more general situation of Theorem \ref{thm:invisible_img} and Lemma \ref{thm:loop_iteration}, which allow for isolated orbifold singularities. The first step is to obtain \cite[Thm.~1]{MR0715246} in the following version. Here the map $\psi^m:\Lambda \To \Lambda$ sends a loop $\gamma$ to its $m$-fold iteration $\gamma^m$.

\begin{thm}\label{thm:loop_iteration} Let $\Lambda$ be the free loop space of a compact Riemannian orbifold $\Orb$ with isolated singularities. For some $\kappa > 0$ let $[g]$ be a class in $\pi_p(|\Lambda|,|\Lambda^{\kappa}|)$. Then $[g^m]:=[\psi^m \circ g] \in \pi_p(|\Lambda|,|\Lambda^{\kappa m^2}|)$ is trivial for almost all $m \in \N$.
\end{thm}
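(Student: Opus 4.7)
The plan is to translate the proof of \cite[Thm.~1]{MR0715246} from the manifold setting to the orbifold setting with only minor adaptations. The conceptual reason this should work is that for a Riemannian orbifold $\Orb$ with isolated singularities the singular locus of the Hilbert orbifold $\Lambda\Orb$ consists precisely of the constant loops at singular points of $\Orb$, all of which lie at energy $0$. Consequently, the singular set is contained in $|\Lambda^\kappa|$ for every $\kappa>0$, and on its regular part $\Lambda_{\mathrm{reg}}\Orb$ the free loop space carries exactly the Riemannian Hilbert manifold structure on which the Bangert--Klingenberg construction operates.

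First, I would represent $[g]$ concretely by a continuous map $g:(D^p,\partial D^p)\to(|\Lambda\Orb|,|\Lambda^{\kappa}|)$ with compact image. By Proposition \ref{prp:finite_approx}, for each $m\in\N$ the iterate $\psi^m\circ g$ lands in $|\Lambda^{\kappa m^2}|$ and can be homotoped by an energy-non-increasing deformation into a finite-dimensional broken-geodesic suborbifold $|\Lambda\Orb^{\kappa m^2}(km)|$, where the number of breakpoints $km$ grows linearly in $m$. In the $\mathcal{G}$-loop description of Section \ref{app:loop_space}, each $\psi^m\circ g(s)$ is then represented by a concatenation of short geodesic pieces contained in convex balls inside manifold charts of $\Orb$, which is the form in which the original argument processes such families.

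Next, in this broken-geodesic framework I would carry out Bangert--Klingenberg's explicit null-homotopy construction for $\psi^m\circ g$ for all sufficiently large $m$. Their argument exploits the $m$-fold iteration by reparameterizing $\gamma^m$ via a one-parameter family of $S^1$-shifts of the basepoint and interpolating between consecutive copies of $\gamma$ with short connector segments, producing for $m$ sufficiently large a deformation of $\psi^m\circ g$ whose image is contained in $|\Lambda^{\kappa m^2}|$. The central energy inequality in the construction is derived from the first-variation formula and the square-integrability of velocities; both are purely local Riemannian-chart statements and apply verbatim to broken geodesic $\mathcal{G}$-loops.

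The main obstacle, and the only point at which genuine orbifold care is needed, is to ensure that the reparameterization, concatenation, and basepoint-shift operations involved in the construction are carried out at the level of $\mathcal{G}$-loops in $\Omega_X$, so that the resulting deformation descends consistently to the quotient $|\Lambda\Orb|=\mathcal{G}\backslash\Omega_X$. This is guaranteed by the fact, recalled in Section \ref{app:loop_space}, that both operations are defined on $\Omega_X$ and are $\mathcal{G}$-equivariant. Moreover, if during the deformation the image of some $s\in D^p$ meets the singular locus of $|\Lambda\Orb|$, no obstruction arises: that locus consists only of constant loops at singularities of $\Orb$ and is contained in $|\Lambda^{\kappa m^2}|$, hence can be absorbed harmlessly into the second term of the relative pair. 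With Theorem \ref{thm:loop_iteration} established in this form, the derivations of Theorem \ref{thm:invisible_img} and Lemma \ref{lem:img} from \cite[Thm.~3, Lem.~2]{MR0715246} transfer without further changes, using Lemma \ref{lem:loop_local_structure_prop} in place of its manifold analog.
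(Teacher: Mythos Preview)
Your overall strategy coincides with the paper's: the singular set of $\Lambda\Orb$ lies at energy zero and hence inside every $\Lambda^\kappa$, so the Bangert--Klingenberg construction from \cite{MR0715246} can be transported verbatim once one has finite-dimensional approximations (Proposition~\ref{prp:finite_approx}). This is exactly the argument the paper gives, and your handling of the orbifold-specific issues (working in $\Lambda_{\mathrm{reg}}\Orb$, absorbing the singular locus into the second term of the pair) is correct.

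One point deserves correction. Your summary of the Bangert--Klingenberg mechanism as ``reparameterizing $\gamma^m$ via a one-parameter family of $S^1$-shifts of the basepoint'' is not what actually happens in \cite{MR0715246}; the $S^1$-action preserves $E$ and cannot by itself lower energy below $\kappa m^2$. The actual construction, which the paper recalls, is fibrewise: one splits $I^p=I^{p-1}\times I$, and for each $s\in I^{p-1}$ replaces the homotopy $t\mapsto (g(s,t))^m$, which drags all $m$ strands simultaneously, by a homotopy that pulls the $m$ copies from $g(s,0)$ to $g(s,1)$ \emph{one at a time} (inserting the short connector segments you mention). Along this modified homotopy at most one strand is ``in transit'' while the remaining $m-1$ sit at an endpoint in $\Lambda^\kappa$, so the energy is bounded by an expression of the form $\big((m-1)\sqrt{\kappa}+C(g)\big)^2<\kappa m^2$ for $m$ large. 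Since you ultimately defer to \cite{MR0715246} for the construction this does not invalidate your proposal, but the mechanism you describe would not on its own yield the needed energy estimate.
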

In the manifold case the idea of the proof is the following. Given a homotopy $h:[a,b]\To \Lambda$, then $h^m:=\psi^m\circ h$ is a naturally associated homotopy from $h^m(a)$ to $h^m(b)$. One can replace this homotopy, which pulls the $m$ loops of $h^m(a)$ to $h^m(b)$ as a whole, by another homotopy $h_m:[a,b]\To \Lambda$ from $h_m(a)=h^m(a)$ to $h_m(b)=h^m(b)$ that pulls the $m$ loops from $h^m(a)$ to $h^m(b)$ successively. The advantage of the new homotopy over the old one is that the energy of $h_m(t)$ depends only on $h(a)$ and $h(b)$ in the limit of large $m$ and can thus be bounded appropriately. This construction can be applied fiberwise to a map $g: (I^p,\partial I^p) \To (|\Lambda|,|\Lambda^{\kappa}|)$ with respect to a splitting $I^p=I^{p-1}\times I$ and yields a homotopic map with image in $\Lambda^{\kappa}$. The same construction can be carried out in the case of an orbifold with isolated singularities. In fact, all regularity issues occurring in \cite{MR0715246} can be handled in the same way in this case since finite-dimensional approximations are also available for orbifold loop spaces (see Proposition \ref{prp:finite_approx}) and since the whole construction can be assumed to take place away from the energy zero level set, and hence in the manifold part of $\Lambda$.

The next step is to obtain a related result in homology as in \cite[Thm.~2]{MR0715246}.

\begin{thm}\label{thm:homot_to_homol} Let $\Lambda$ be the free loop space of a compact Riemannian orbifold $\Orb$ with isolated singularities. Let $H$ be an element of $H_*(\Lambda_\kappa,\Lambda^{\kappa})$, where $\kappa > 0$ and $\Lambda_\kappa$ is the union of all components of $\Lambda$ intersecting $\Lambda^\kappa$. Let $K$ be a finite set of integers $k\geq 2$. Then there exists $m\in \N$ such that no $k\in K$ divides $m$ and such that $\psi_*^m(H)$ vanishes in $H_*(\Lambda_\kappa,\Lambda^{\kappa m^2})$.
\end{thm}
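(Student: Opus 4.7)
The plan is to deduce the homological statement from Theorem \ref{thm:loop_iteration} by working cell by cell on a CW-representative of $H$, in direct analogy with the proof of \cite[Thm.~2]{MR0715246}. In our orbifold setting the corrections are cosmetic, because the whole construction can be arranged to take place in the regular part of the loop space, away from the constant loops at singular points, which are the only additional singularities of $\Lambda\Orb$.

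First I would pass to a finite-dimensional, CW setting. Since $\Orb$ is compact, Proposition \ref{prp:finite_approx} provides finite-dimensional approximations $\Lambda\Orb^{\kappa'}(k)$ onto which $\Lambda\Orb^{\kappa'}$ retracts by energy-nonincreasing deformation. Choosing $\kappa'$ large enough to contain both a representative of $H$ and the image of that representative under $\psi^m$ (recall $E(\gamma^m)=m^2E(\gamma)$, so energy at most $\kappa$ becomes energy at most $m^2\kappa$), the relevant part of $\Lambda_\kappa$ is compact and finite-dimensional, hence has the homotopy type of a finite CW-complex. Accordingly, $H$ can be represented by a finite relative singular cycle $z=\sum_{j=1}^N n_j\sigma_j$ with $\sigma_j:(\Delta^p,\partial\Delta^p)\To(\Lambda_\kappa,\Lambda^\kappa)$.

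Next I would apply Theorem \ref{thm:loop_iteration} cellwise. Each top-dimensional simplex $\sigma_j$ determines a class $[\sigma_j]\in\pi_p(|\Lambda|,|\Lambda^\kappa|)$, and Theorem \ref{thm:loop_iteration} yields a cofinite set $A_j\subseteq\N$ such that $[\psi^m\circ\sigma_j]$ is trivial in $\pi_p(|\Lambda|,|\Lambda^{\kappa m^2}|)$ for all $m\in A_j$, i.e.\ admits a nullhomotopy relative to $\partial\Delta^p$ into $\Lambda^{\kappa m^2}$. The set of $m\in\N$ not divisible by any $k\in K$ is infinite since $K$ is finite, and intersects every cofinite set, so the divisibility constraint adds no genuine difficulty and I can pick $m\in\bigcap_{j=1}^N A_j$ not divisible by any $k\in K$.

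The main obstacle, as in \cite[Thm.~2]{MR0715246}, is to assemble the cellwise nullhomotopies into a single chain-level deformation of $\psi^m_*(z)$ which is coherent on shared faces. I would handle this by induction over skeleta of the supporting complex: the faces of $\partial\sigma_j$ already map into $\Lambda^\kappa$, so after $\psi^m$ they lie in $\Lambda^{\kappa m^2}$; one then extends the nullhomotopy skeleton by skeleton using the triviality of the relevant relative homotopy classes in dimensions $\le p$, refining the cycle $z$ (subdividing $\Delta^p$ if necessary) so that the deformations on adjacent cells agree on their common boundaries. This step is the technical heart of \cite{MR0715246} and transfers to our setting without modification, because everything happens at strictly positive energy and hence in the Hilbert manifold $\Lambda_{\mathrm{reg}}\Orb$, on which the finite-dimensional approximation of Proposition \ref{prp:finite_approx} is an honest manifold locally. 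Once the deformation is produced, $\psi^m_*(z)$ is chain-homotopic to a cycle supported in $\Lambda^{\kappa m^2}$, so $\psi^m_*(H)=0$ in $H_*(\Lambda_\kappa,\Lambda^{\kappa m^2})$ as required.
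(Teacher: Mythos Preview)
Your proposal is correct and follows the same approach as the paper, which simply states that the proof of \cite[Thm.~2]{MR0715246} can be taken verbatim, applying Theorem~\ref{thm:loop_iteration} inductively to push a representative of $\psi^m_*(H)$ into $\Lambda^{\kappa m^2}$. Your sketch spells out this cell-by-cell, skeleton-inductive argument in more detail than the paper does, but the strategy is identical, including the observation that everything takes place away from the energy-zero singular set of $\Lambda\Orb$.
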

The proof of \cite[Thm.~2]{MR0715246} can be taken verbatim as a proof for Theorem \ref{thm:homot_to_homol}. For sufficiently large $m$ one can construct a homotopy from a representative of $\psi_*^m(H)$ to a representative in $\Lambda^{\kappa m^2}$ by using Theorem \ref{thm:loop_iteration} inductively.

Now we explain the proof of \cite[Thm.~3]{MR0715246} and why it generalizes to the setting of Theorem \ref{thm:invisible_img}. Suppose that there exist only finitely many towers of closed geodesics on $\Orb$. Then all critical $S^1$-orbits on $\Lambda$ are isolated. Moreover, by Lemma \ref{lem:loop_local_structure_prop}, $(iii)$, and perhaps after choosing a different $c$, one can find $p\in \N$ such that $H_p(\Lambda(c)\cup S^1c,\Lambda(c))\neq 0$ and $H_q(\Lambda(d)\cup S^1d,\Lambda(d))= 0$ for every $q>p$ and every closed geodesic $d$ with $\mathrm{ind}(d^m)=0$ for all $m$. In the manifold case Lemma \ref{lem:loop_local_structure_prop}, $(ii)$, implies that there exist integers $\{k_1,\ldots,k_s\}$, $k_i\geq 2$, such that
\[
\psi_*^m:H_p(\Lambda(c) \cup S^1c,\Lambda(c)) \To H_p(\Lambda(c^m)\cup S^1c^m,\Lambda(c^m))
\]
is an isomorphism whenever none of the $k_i$ divides $m$ (for details see the proof of \cite[Thm.~2]{MR0715246}). The same conclusion holds in the present situation since its proof involves only local arguments. By Lemma \ref{lem:loop_local_structure_prop}, $(i)$, and the assumption of only finitely many towers of closed geodesics there exists some $A>0$ such that every closed geodesic $d$ with $E(d)>A$ either satisfies $\mathrm{ind}(d)>p+1$ or $\mathrm{ind}(d^m)=0$ for all $m$. Hence one has $H_{p+1}(\Lambda(d)\cup S^1d,\Lambda(d))= 0$ whenever $d$ is a closed geodesic with $E(d)>A$. Therefore, as in \cite{MR0715246}, standard arguments from Morse theory imply that $i_*:H_p(\Lambda(c^m)\cup S^1c^m,\Lambda(c^m)) \To H_p(\Lambda,\Lambda(c^m))$ is one-to-one if $E(c^m)>A$. In particular, the composition $i_* \circ \psi_*^m: H_p(\Lambda(c) \cup S^1c,\Lambda(c)) \To H_p(\Lambda,\Lambda(c^m))$ is one-to-one. Since $c$ is not an absolute minimum in its free homotopy class, this contradicts Theorem \ref{thm:homot_to_homol}.

Hence Theorem \ref{thm:invisible_img} holds. The proof of \cite[Lem.~2]{MR0715246} and Lemma \ref{thm:loop_iteration} works as follows. $H_p(\Lambda(c_i)\cup S^1c_i, \Lambda(c_i))\neq 0$ implies $\mathrm{ind}(c_i)\leq p$ by Lemma \ref{lem:loop_local_structure_prop}, $(iii)$. Since for every closed geodesic $c$ either $\mathrm{ind}(c^m)$ grows linearly with $m$ or $\mathrm{ind}(c^m)=0$ for all $m$, the $c_i$ can be iterates of a finite number of prime closed geodesics only if $\mathrm{ind}(c_i^m)=0$ for some $i$ and all $m$. In this case Theorem \ref{thm:invisible_img} proves the existence of infinitely many closed geodesics.
\newline
\newline
\textbf{Acknowledgements.} I would like to thank Nick Edelen and Alexander Lytchak for discussions and comments on a draft version of this paper. I also thank Victor Bangert and Stephan Wiesendorf for remarks and hints to the literature, respectively. I thank the referee for useful comments that helped to improve the exposition.

A partial support by the DFG funded project SFB/TRR 191 ‘Symplectic Structures in Geometry, Algebra and Dynamics' is gratefully acknowledged.

\end{document}